\numberwithin{equation}{section}
\let\Re=\undefined\DeclareMathOperator*{\Re}{Re}
\newcommand{\R}{\mathbb{R}}
\newcommand{\C}{\mathbb{C}}
\newcommand{\Z}{\mathbb{Z}}
\newtheorem{theorem}{Theorem}[section]
\newtheorem{lemma}[theorem]{Lemma}
\newtheorem{proposition}[theorem]{Proposition}
\theoremstyle{definition}
\newtheorem{remark}[theorem]{Remark}
\newcommand{\Extend}[5]{\ext@arrow0099{\arrowfill@#1#2#3}{#4}{#5}}
\begin{document}
\title[dispersive estimates]{Decay estimates for one Aharonov-Bohm solenoid in a uniform magnetic field I: Schr\"odinger equation}

\author{Haoran Wang}
\address{Department of Mathematics, Beijing Institute of Technology, Beijing 100081;}
\email{wanghaoran@bit.edu.cn}

\author{Fang Zhang}
\address{Department of Mathematics, Beijing Institute of Technology, Beijing 100081;}
\email{zhangfang@bit.edu.cn;}

\author{Junyong Zhang}
\address{Department of Mathematics, Beijing Institute of Technology, Beijing 100081;}
\email{zhang\_junyong@bit.edu.cn; }

\begin{abstract}
This is the first of a series of papers in which we investigate the decay estimates for dispersive equations with Aharonov-Bohm solenoids in a uniform magnetic field.  In this first starting paper, we prove the local-in-time dispersive estimates and Strichartz estimates for Schr\"odinger equation with one Aharonov-Bohm solenoid  in a uniform magnetic field. The key ingredient is the construction of Schr\"odinger propagator, we provide two methods to construct the propagator. The first one is  combined the strategies of \cite{FFFP1} and \cite{GYZZ22, FZZ22}, and the second one is based on the Schulman-Sunada formula in sprit of \cite{stov, stov1} in which the heat kernel has been studied. In future papers, we will continue investigating this quantum model for wave with one or  multiple Aharonov-Bohm solenoids in a uniform magnetic field.

\end{abstract}

\maketitle

\section{Introduction}
Let us consider the electromagnetic Hamiltonian
\begin{equation*}
H_{A, V}=-(\nabla+iA(x))^2+V(x),
\end{equation*}
where the electric scalar potential $V: \R^n\to \R$ and the magnetic vector potential
\begin{equation*}
A(x)=(A^1(x),\ldots, A^n(x)): \, \R^n\to \R^n
\end{equation*}satisfies the Coulomb gauge condition
\begin{equation}\label{div0}
\mathrm{div}\, A=0.
\end{equation}
In three dimensions, the magnetic vector potential $A$ produces a magnetic field $B$, which is given by
\begin{equation*}
B=\mathrm{curl} (A)=\nabla\times A.
\end{equation*}
In general dimensions $n\geq2$, $B$ should be viewed as the matrix-valued field $B:\R^n\to \mathcal{M}_
{n\times n}(\R)$ given by
\begin{equation}\label{B-n}
B:=DA-DA^t,\quad B_{ij}=\frac{\partial A^i}{\partial x_j}-\frac{\partial A^j}{\partial x_i}.
\end{equation}
The Schr\"odinger operators with electromagnetic potentials have been extensively studied from the aspects of
spectral and scattering theory, we refer to Avron-Herbst-Simon \cite{AHS1,AHS2,AHS3} and Reed-Simon \cite{RS}, in which many important physical potentials (e.g. the constant magnetic field and the Coulomb electric potential) are discussed. The purpose of our program here is to study how the electric or magnetic potentials affect the short-time or long-time behavior of the solutions for
dispersive equations (e.g. the Schr\"odinger, wave and Klein-Gordon).\vspace{0.2cm}

Indeed, the program on the study of the decay estimates and Strichartz estimates of the dispersive equations has a long history due to their significance in analysis and PDEs fields. We refer to   \cite{BPSS, BPST, CS,DF, DFVV, EGS1, EGS2, S} and the references therein for the the classical and important Schr\"odinger and wave equations with electromagnetic potentials in mathematical and physical fields. However, since the different potentials has different effects, it is hard to provide a uniform argument to treat all of potentials, and the picture of the program is too far to completed, especially for some critical physic potentials. For example, the dispersive equations with the Aharonov-Bohm potential, as a diffraction physic model and scaling critical purely magnetic potential, has attracted more and more people to study from mathematic viewpoint.
In \cite{FFFP, FFFP1}, Fanelli, Felli, Fontelos, and Primo studied the validity of the time-decay estimate for the Schr\"odinger equation associated with the operator included the Aharonov-Bohm potential.
However, the argument in \cite{FFFP, FFFP1} breaks down for wave equation due to the lack of  pseudoconformal invariance (which was used for Schr\"odinger equation). Very recently, Fanelli, Zheng and the last author \cite{FZZ22}
established the Strichartz estimate for wave equation by constructing the odd sin propagator. To solve open problems, raised  in the survey \cite{Fanelli}, about the the dispersive estimate for other equations (e.g. Klein-Gordon), Gao, Yin, Zheng and the last author \cite{GYZZ22} constructed the spectral measure and further proved the time decay and Strichartz estimates of Klein-Gordon equation.  The potential models in \cite{FFFP, FFFP1,FZZ22,GYZZ22}  are scaling invariant and with no unbounded (at infinity) perturbations, which is a special cases of \eqref{H-A} below with $B_0\equiv 0$.\vspace{0.2cm}

In this paper, as a sequence of recent paper \cite{GYZZ22, FZZ22}, we study the decay and Strichartz estimates for the Schr\"odinger equation in a mixed magnetic fields on the plane pierced by one infinitesimally thin Aharonov-Bohm solenoid and subjected to a perpendicular uniform magnetic field of constant magnitude $B_0$.
More precisely, we focus on the typical 2D purely magnetic model Hamiltonians
\begin{equation}\label{H-A}
H_{\alpha,B_0}=-(\nabla+i(A_B(x)+A_{\mathrm{hmf}}(x)))^2,
\end{equation}
where $A_B(x)$ is the Aharonov-Bohm potential in \cite{AB59}
\begin{equation}\label{AB-potential}
A_B(x)=\alpha\Big(-\frac{x_2}{|x|^2},\frac{x_1}{|x|^2}\Big),\quad x=(x_1,x_2)\in\mathbb{R}^2\setminus\{0\},
\end{equation}
with $\alpha\in\mathbb{R}$ representing the circulation of $A_B$ around the solenoid
and $A_{\mathrm{hmf}}(x)$ is the potential
\begin{equation}\label{A-hmf}
A_{\mathrm{hmf}}(x)=\frac{B_0}{2}(-x_2,x_1),\quad B_0>0.
\end{equation}
From \eqref{B-n}, the total magnetic filed is given by $B_0+\alpha \delta(x)$ where $\delta$ is the Dirac delta function.
From mathematics viewpoint, it is worth emphasizing three key features that about the perturbations of the potentials here. The first one is that the Aharonov-Bohm potential producing the singular magnetic field has the same scaling of $\nabla$ which is homogenous degree of $-1$, so that the perturbation of Aharonov-Bohm potential \eqref{AB-potential} is non-trivial; the second one is that the
magnetic poetential \eqref{A-hmf} is homogenous degree of $1$ so that the Schr\"odinger operator \eqref{H-A} is not scaling invariant so is the Schr\"odinger equation.
The potential $A_{\mathrm{hmf}}(x)$ is unbounded and the magnetic filed $B(x)=B_0$ from \eqref{B-n} is a uniform magnetic field caused a trapped well. On one hand, due to the perturbations of
magnetic fields \eqref{A-hmf} are involved, the operator $H_{\alpha,B_0}$ has pure point spectrum, the dispersive behavior of Schr\"odinger equation associated with $H_{\alpha,B_0}$  becomes quite different from the model in \cite{FZZ22, GYZZ22}.  On the other hand, because of the Aharonov-Bohm effect,  a feature of the Mehler kernel which is related to the Schr\"odinger kernel associated with pure uniform magnetic field breaks down. Indeed, if $\alpha=0$,
the Schr\"odinger kernel can be written as
\begin{equation*}
e^{itH_{0,B_0}}(x,y)=\frac{B_0}{4\pi\sin(B_0t)}\exp\Big\{\frac{B_0}{4i}\big(\cot(B_0t)|x-y|^2-2x\wedge y\big)\Big\},
\end{equation*}
furthermore, one can write
\begin{equation*}
\begin{split}
e^{itH_{0,B_0}}(x,y)=\frac{B_0}{4\pi\sin(B_0t)}&\exp\Big\{\frac{B_0}{4i}\cot(B_0t)\big(|x|^2+|y|^2\big)\Big\}\\
&\times\exp\Big\{i\frac{B_0y\cdot R(B_0t)x}{2\sin(B_0t)}\Big\},
\end{split}
\end{equation*}
where $R(\theta)$ is the usual $2\times 2$ rotation matrix given by
\begin{equation*}
R(\theta)=\left(\begin{array}{cc}\cos\theta &-\sin\theta\\ \sin\theta & \cos\theta \end{array}\right).
\end{equation*}
Therefore, see \cite[Theorem 2]{KL}, one can prove the Strichartz estimates
\begin{equation*}
\|e^{itH_{0,B_0}}f\|_{L^q_t((0,\frac{\pi}{B_0});L^r_x(\R^2)}=(4\pi)^{1-\frac4q}\|e^{it\Delta}f\|_{L^q_t(\R;L^r_x(\R^2)}\leq C\|f\|_{L^2(\R^2)},
\end{equation*}
where $(q,r)\in\Lambda_S$ defined in \eqref{adm:S} below. \vspace{0.2cm}

More precisely,  we aim to study the dispersive behavior of the magnetic Schr\"odinger equation
\begin{equation}\label{equ:S}
\begin{cases}
i\partial_t u(t,x)- H_{\alpha,B_0} u(t,x)=0,\\
u(0,x)=u_0(x),
\end{cases}
\end{equation}
where $H_{\alpha,B_0}$ is given in \eqref{H-A} with the perturbation of potentials $A_B(x)$ and $A_{\mathrm{hmf}}(x)$.\vspace{0.2cm}


Now we state our main results.
\begin{theorem}\label{thm:S} Let  $H_{\alpha,B_0}$ be  in \eqref{H-A} with the potentials being \eqref{AB-potential}-\eqref{A-hmf}
and let $u(t,x)$ be the solution of \eqref{equ:S}.
Then there exists a constant $C>0$ such that the dispersive estimate
\begin{equation}\label{dis-S}
\|u(t,x)\|_{L^\infty(\R^2)}\leq C |\sin(tB_0)|^{-1}\|u_0\|_{L^1(\R^2)},\quad t\neq \frac{k\pi}{B_0},\, k\in\Z,
\end{equation}
and the Strichartz estimate holds for
\begin{equation}\label{str-S}
\|u(t,x)\|_{L^q([0,T];L^p(\R^2))}\leq C \|u_0\|_{L^2(\R^2)},
\end{equation}
 where $T\in (0, \frac{\pi}{2B_0})$ and $(q,p)\in \Lambda_S$ with
\begin{equation}\label{adm:S}
\Lambda_S:=\Bigg\{(q,p)\in[2,+\infty]\times [2,+\infty):\frac2q=2\Big(\frac12-\frac1p\Big)\Bigg\}.
\end{equation}
\end{theorem}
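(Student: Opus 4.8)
\emph{Proof plan.} The plan is to reduce both estimates to a single pointwise bound on the integral kernel of the propagator $e^{-itH_{\alpha,B_{0}}}$ (with the sign convention of \eqref{equ:S}), which I would construct in closed form by separation of variables. First, writing $\alpha=m_{0}+\tilde\alpha$ with $m_{0}\in\Z$ and $\tilde\alpha\in[0,1)$, the gauge transformation $u\mapsto e^{-im_{0}\theta}u$ conjugates $H_{\alpha,B_{0}}$ to $H_{\tilde\alpha,B_{0}}$ and preserves every $L^{p}(\R^{2})$ norm, so one may assume $\alpha\in[0,1)$ (for $\alpha=0$ the operator is the Landau Hamiltonian recalled in the introduction). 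In polar coordinates the combined potential $A_{B}+A_{\mathrm{hmf}}$ is purely tangential with modulus $\alpha r^{-1}+B_{0}r/2$, so on $L^{2}(\R^{2})=\bigoplus_{k\in\Z}\HH_{k}$, $\HH_{k}=L^{2}((0,\infty),r\,dr)\otimes\{e^{ik\theta}\}$, the operator $H_{\alpha,B_{0}}$ restricts to the radial operator
\begin{equation*}
h_{k}=-\pa_{r}^{2}-\tfrac1r\pa_{r}+\frac{(k+\alpha)^{2}}{r^{2}}+\frac{B_{0}^{2}r^{2}}{4}+(k+\alpha)B_{0},
\end{equation*}
taken in its Friedrichs realization. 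Up to the shift $(k+\alpha)B_{0}$ this is the angular-order-$|k+\alpha|$ part of the two-dimensional harmonic oscillator $-\Delta+\tfrac{B_{0}^{2}}{4}|x|^{2}$, with spectrum $\{B_{0}(2j+|k+\alpha|+(k+\alpha)+1):j\ge0\}$ and eigenfunctions $\propto r^{|k+\alpha|}e^{-B_{0}r^{2}/4}L_{j}^{(|k+\alpha|)}(B_{0}r^{2}/2)$.

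\textbf{Construction of the propagator.} Next I would expand $e^{-ith_{k}}$ in this eigenbasis and sum the Laguerre series by the Hille--Hardy generating identity with parameter $z=e^{-2iB_{0}t}$ (justified by analytic continuation from the heat semigroup $z=e^{-2B_{0}\tau}$, $\tau>0$, where $|z|<1$); after the Gaussian factors cancel this gives the Mehler-type kernel $\tfrac{B_{0}e^{-i(k+\alpha)B_{0}t}}{2i\sin(B_{0}t)}\exp\!\big(\tfrac{iB_{0}}{4}\cot(B_{0}t)(r^{2}+s^{2})\big)I_{|k+\alpha|}\!\big(\tfrac{B_{0}rs}{2i\sin(B_{0}t)}\big)$ on $\HH_{k}$. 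Summing over $k$ and using $I_{\nu}(-i\rho)=i^{-\nu}J_{\nu}(\rho)$ yields
\begin{equation*}
e^{-itH_{\alpha,B_{0}}}(x,y)=\frac{B_{0}\,e^{-i\alpha B_{0}t}}{4\pi i\sin(B_{0}t)}\,\exp\!\Big(\tfrac{iB_{0}}{4}\cot(B_{0}t)(|x|^{2}+|y|^{2})\Big)\,\mathcal{G}_{\alpha}\!\Big(\tfrac{B_{0}|x||y|}{2\sin(B_{0}t)},\,\theta-\theta'-B_{0}t\Big),
\end{equation*}
where $\mathcal{G}_{\alpha}(\rho,\varphi):=\sum_{k\in\Z}e^{ik\varphi}i^{-|k+\alpha|}J_{|k+\alpha|}(\rho)$; since $\mathcal{G}_{0}(\rho,\varphi)=e^{-i\rho\cos\varphi}$ by Jacobi--Anger, this is consistent with the $\alpha=0$ Mehler kernel in the introduction. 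I expect a second, independent derivation of the same formula from the Schulman--Sunada construction on the universal cover of $\R^{2}\setminus\{0\}$ in the spirit of \cite{stov}, averaging the known Landau kernel over winding sectors weighted by the characters $e^{2\pi i\alpha n}$, $n\in\Z$.

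\textbf{Dispersive estimate.} The Mehler prefactor above has modulus $B_{0}(4\pi|\sin(B_{0}t)|)^{-1}$, so \eqref{dis-S} would follow from
\begin{equation*}
\sup_{\rho>0,\ \varphi\in\R}\ \big|\mathcal{G}_{\alpha}(\rho,\varphi)\big|<\infty .
\end{equation*}
This is exactly the uniform boundedness of the angular profile of the \emph{free} Aharonov--Bohm Schr\"odinger kernel established in \cite{FFFP1} (see also \cite{FZZ22, GYZZ22}): for $\alpha\in(0,1)$ one rewrites $\mathcal{G}_{\alpha}$ via its Lipschitz--Hankel/contour-integral representation and splits it into a geometric-optics term $\propto e^{-i\rho\cos\varphi}$ and a diffracted term equal to an absolutely and uniformly convergent oscillatory integral $\int_{0}^{\infty}e^{i\rho\cosh s}\kappa_{\alpha}(s,\varphi)\,ds$ with $\kappa_{\alpha}\in L^{1}$, both bounded independently of $(\rho,\varphi)$; hence $\|e^{-itH_{\alpha,B_{0}}}\|_{L^{1}\to L^{\infty}}\le C|\sin(tB_{0})|^{-1}$ for $t\notin\tfrac{\pi}{B_{0}}\Z$.

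\textbf{Strichartz estimate and main obstacle.} For $t\in(0,\pi/(2B_{0}))$ concavity of the sine gives $\tfrac{2}{\pi}B_{0}t\le\sin(B_{0}t)\le B_{0}t$, hence $|\sin(B_{0}t)|^{-1}\simeq(B_{0}t)^{-1}$; choosing $T<\pi/(2B_{0})$ keeps every difference $t-s$ with $t,s\in[0,T]$ inside $(-\tfrac{\pi}{2B_{0}},\tfrac{\pi}{2B_{0}})$, so $|\sin(B_{0}(t-s))|^{-1}\lesssim|t-s|^{-1}$. Thus on $[0,T]$ the group $e^{-itH_{\alpha,B_{0}}}$ enjoys the same $L^{1}\to L^{\infty}$ decay $\lesssim t^{-1}$ as the free two-dimensional Schr\"odinger group together with $L^{2}$-conservation, and the standard $TT^{*}$/Hardy--Littlewood--Sobolev argument of Ginibre--Velo --- the pairs $(q,p)\in\Lambda_{S}$ avoiding the forbidden endpoint $(q,p)=(2,\infty)$ since $p<\infty$ --- yields \eqref{str-S}. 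The hard part will be the previous two steps: fixing the correct (Friedrichs) self-adjoint realization so that the radial Bessel order is $+|k+\alpha|$; justifying the term-by-term Hille--Hardy summation at the purely oscillatory $z=e^{-2iB_{0}t}$; and above all the uniform-in-$(\rho,\varphi)$ control of $\mathcal{G}_{\alpha}$, which is the genuinely diffractive feature of the problem and the exact analogue of the pure Aharonov--Bohm dispersive estimate, the uniform magnetic field contributing only the explicit, harmless Mehler factor $B_{0}(4\pi\sin(B_{0}t))^{-1}\exp(\tfrac{iB_{0}}{4}\cot(B_{0}t)(|x|^{2}+|y|^{2}))$.
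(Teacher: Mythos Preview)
Your proposal is correct and follows essentially the same route as the paper: eigenfunction expansion in each angular sector, summation of the Laguerre series via the Hille--Hardy (Poisson) identity to obtain the Mehler-type radial kernel with $I_{|k+\alpha|}$, resummation in $k$ to isolate the geometric term plus a diffractive integral, a uniform pointwise bound on the latter, and then Keel--Tao for Strichartz on $[0,T]$ with $T<\pi/(2B_{0})$. The one difference is packaging: you recognize that your $\mathcal{G}_{\alpha}(\rho,\varphi)$ is literally the angular profile of the pure Aharonov--Bohm propagator and invoke \cite{FFFP1,FZZ22,GYZZ22} for its uniform bound, whereas the paper re-derives the geometric/diffractive splitting via \eqref{mBessel:integral} and Poisson summation and then proves by hand that the symmetrized kernel $\frac{e^{-\alpha s}}{1+e^{-s+i\theta}}+\frac{e^{\alpha s}}{1+e^{s+i\theta}}$ lies in $L^{1}_{s}$ uniformly in $\theta$ (note that each summand separately is \emph{not} uniformly $L^{1}$ near $s=0$ when $\theta\to\pi$, so the cancellation in the symmetrization is essential---make sure your cited $\kappa_{\alpha}\in L^{1}$ statement is the symmetrized one).
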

\begin{remark} The decay estimate \eqref{dis-S} is the periodic with period $\pi/B_0$.
 The endpoint of the time interval $T$ in the Strichartz estimates \eqref{str-S} depends on the coefficient $B_0$ of unbounded potential. Actually,
 if the unbounded potentials disappear, i.e.  $B_0=0$, one can take $T$ to be $+\infty$ safely, that is, the global-in-time Strichartz estimates, which is corresponding to the Laplacian with the Aharonov-Bohm potential (see Theorem 1.3 of \cite{FFFP1}). However, in the present paper, as mentioned above, we only obtain local-in-time Strichartz estimates associated with the operator \eqref{H-A} due to the unbounded potentials caused trapped well.
\end{remark}

\begin{remark} Let
\begin{equation*}
A(x)=A_B(x)+A_{\mathrm{hmf}}(x),
\end{equation*}
one can verify that $\mathrm{div}\, A=0$ satisfies \eqref{div0}. Then, one observe that
\begin{equation*}
\begin{split}
H_{\alpha,B_0}&=-(\nabla+i(A_B(x)+A_{\mathrm{hmf}}(x)))^2
\\&=-\Delta+\big(\frac{B_0}2\big)^2 |x|^2+\frac{\alpha^2}{|x|^2} +i B_0(-x_2,x_1)\cdot \nabla +i \frac{2\alpha}{|x|^2}(-x_2,x_1)\cdot \nabla+\alpha B_0.
\end{split}
\end{equation*}
One will see that the operator is perturbed by the inverse-square potential and harmonic oscillator. This phenomenon is natural because the unbounded potential cause a trapped well, the energy cannot be dispersive for long time.  This is closely relate to the models with pure harmonic oscillators, i.e. $H_{0,V}=-\Delta+|x|^2$, in which Koch and Tataru \cite{KT05} proved the decay is the periodic with period $\pi$
\begin{equation*}
\|e^{it H_{0,V}}\|_{L^1(\R^n)\to L^\infty(\R^n)}\leq C|\sin t|^{-\frac n2}.
\end{equation*}
\end{remark}

 \vspace{0.2cm}

The paper is organized as follows. In Section \ref{sec:pre}, in a preliminary step, we recall the the self-adjoint extension of the operator $H_{\alpha,B_0}$, and study the spectrum of $H^{F}_{\alpha, B_0}$ (which is the Friedrichs of $H_{\alpha,B_0}$). In Section \ref{sec:const1}, we construct the Schr\"odinger propagator by combing the strategies of \cite{FFFP1} and \cite{GYZZ22,FZZ22}. In Section \ref{sec:const2}, we construct the Schr\"odinger propagator by using another method based on the Schulman-Sunada formula.  Finally, in Section \ref{sec:proof}, we prove the Theorem \ref{thm:S} by using the representation of the Schr\"odinger propagator constructed in previous sections.
\vspace{0.2cm}

{\bf Acknowledgments:}\quad  The authors thank L. Fanelli and P. \v{S}t'ov\'{\i}\v{c}ek for helpful discussions. This work is supported by National Natural Science Foundation of China (12171031, 11901041, 11831004).
\vspace{0.2cm}

\section{preliminaries} \label{sec:pre}

In this section, we first recall the self-adjoint extension of the operator $H_{\alpha,B_0}$, the Friedrichs extension, and then we study the spectrum of $H^{F}_{\alpha,B_0}$ (which is the Friedrichs of $H_{\alpha,B_0}$).

\subsection{Quadratic form and the self-adjoint extension}

Define the space $\mathcal{H}_{\alpha,B_0}^1(\R^2)$ as the completion of $\mathcal{C}_c^\infty(\R^2\setminus\{0\};\C)$
with respect to the norm
\begin{equation*}
\|f\|_{\mathcal{H}_{\alpha,B_0}^1(\R^2)}=\Big(\int_{\R^2}|\nabla_{\alpha,B_0} f(x)|^2 dx\Big)^{\frac12}
\end{equation*}
where
\begin{equation*}
\nabla_{\alpha,B_0} f(x)=\nabla f+i(A_B+A_{\mathrm{hmf}})f.
\end{equation*}
The quadratic form $Q_{\alpha,B_0}$ associated with $H_{\alpha,B_0}$ is defined by
\begin{equation*}
\begin{split}
Q_{\alpha,B_0}: & \quad \quad \mathcal{H}_{\alpha,B_0}^1\to \R\\
Q_{\alpha,B_0}(f)&=\int_{\R^2}|\nabla_{\alpha,B_0} f(x)|^2dx.
\end{split}
\end{equation*}
Then the quadratic form $Q_{\alpha,B_0}$ is positive definite, thus it implies that the operator
$H_{\alpha,B_0}$ is symmetric semi bounded from below which admits a self-adjoint extension (Friedrichs extension)
$H^{F}_{\alpha,B_0}$
with the natural form domain
\begin{equation*}
\mathcal{D}=\Big\{f\in \mathcal{H}_{\alpha,B_0}^1(\R^2):  H^{F}_{\alpha,B_0}f\in L^{2}(\R^2)\Big\}
\end{equation*}
Even though the operator $H_{\alpha,B_0}$ has many other self-adjoint extensions (see \cite{ESV}) by the von Neumann extension theory,  in this whole paper, we use the simplest Friedrichs extension and briefly write $H_{\alpha,B_0}$ as its Friedrichs extension $H^{F}_{A}$.

\subsection{The spectrum of the operator $H_{\alpha,B_0}$}

In this subsection, we modify the argument of \cite{FFFP1} to obtain the eigenvalue and eigenfunction of the
Schr\"odinger operator
\begin{equation*}
H_{\alpha,B_0}=-(\nabla+i(A_B(x)+A_{\mathrm{hmf}}(x)))^2,
\end{equation*}
where the magnetic vector potentials are in \eqref{AB-potential} and
\eqref{A-hmf}.
More precisely, we will prove that
\begin{proposition}[The spectrum for $H_{\alpha,B_0}$]\label{prop:spect}
Let  $H_{\alpha,B_0}$ be the self-adjoint Schr\"odinger operator in \eqref{H-A}.
Then the eigenvalues of $H_{\alpha,B_0}$ are discrete and are given by
\begin{equation}\label{eigen-v}
\lambda_{k,m}=(2m+1+|k+\alpha|)B_0+(k+\alpha)B_0,\quad m,\,k\in\mathbb{Z},\, m\geq0,
\end{equation}
and the (finite) multiplicity of $\lambda_{k,m}$ is
\begin{equation*}
\#\Bigg\{j\in\mathbb{Z}:\frac{\lambda_{k,m}-(j+\alpha)B_0}{2B_0}-\frac{|j+\alpha|+1}{2}\in\mathbb{N}\Bigg\}.
\end{equation*}
Furthermore, let $\theta=\frac{x}{|x|}$, the corresponding eigenfunction is given by
\begin{equation}\label{eigen-f}
V_{k,m}(x)=|x|^{|k+\alpha|}e^{-\frac{B_0 |x|^2}{4}}\, P_{k,m}\Bigg(\frac{B_0|x|^2}{2}\Bigg)e^{ik\theta}
\end{equation}
where  $P_{k,m}$ is the polynomial of degree $m$ given by
\begin{equation*}
P_{k,m}(r)=\sum_{n=0}^m\frac{(-m)_n}{(1+|k+\alpha|)_n}\frac{r^n}{n!}.
\end{equation*}
with $(a)_n$ ($a\in\R$) denoting the Pochhammer's symbol
\begin{align*}
(a)_n=
\begin{cases}
1,&n=0;\\
a(a+1)\cdots(a+n-1),&n=1,2,\cdots
\end{cases}
\end{align*}\end{proposition}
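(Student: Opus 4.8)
The plan is to separate variables in polar coordinates $(r,\theta)$, reduce the eigenvalue problem for $H_{\alpha,B_0}$ to a one-dimensional ODE in the radial variable, and identify that ODE with a confluent hypergeometric (Laguerre-type) equation whose $L^2$ solutions are exactly the proposed eigenfunctions \eqref{eigen-f}. First I would write the expanded form of $H_{\alpha,B_0}$ recorded in the second remark of the introduction, namely $-\Delta+(B_0/2)^2|x|^2+\alpha^2/|x|^2+iB_0\,\partial_\theta+i(2\alpha/|x|^2)\partial_\theta+\alpha B_0$ (using that $(-x_2,x_1)\cdot\nabla=\partial_\theta$ in polar coordinates). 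Because each term is rotationally equivariant, the operator commutes with $-i\partial_\theta$; writing a candidate eigenfunction as $f(r)e^{ik\theta}$ for $k\in\Z$ turns the action of $\partial_\theta$ into multiplication by $ik$, and the Laplacian $\Delta=\partial_r^2+r^{-1}\partial_r+r^{-2}\partial_\theta^2$ contributes the term $-k^2/r^2$. Collecting everything, the eigenvalue equation $H_{\alpha,B_0}(f e^{ik\theta})=\lambda f e^{ik\theta}$ becomes the radial ODE
\begin{equation*}
-f''-\frac1r f'+\frac{(k+\alpha)^2}{r^2}f+\Big(\frac{B_0}{2}\Big)^2 r^2 f=\big(\lambda-(k+\alpha)B_0\big) f,
\end{equation*}
after combining $-k^2/r^2+\alpha^2/r^2-2\alpha k/r^2=(k+\alpha)^2/r^2$ and moving the constant $\alpha B_0$ and the $-kB_0$ (from $iB_0\cdot ik$) to the right-hand side.

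Next I would solve this radial equation by the standard substitution adapted to the harmonic-oscillator-plus-inverse-square structure. Set $s=B_0 r^2/2$ and factor out the expected asymptotics, writing $f(r)=r^{|k+\alpha|}e^{-B_0 r^2/4}g(s)$; the prefactor $r^{|k+\alpha|}$ is forced by the Frobenius indicial analysis at $r=0$ (the other root $r^{-|k+\alpha|}$ is not in the form domain $\mathcal{H}^1_{\alpha,B_0}$, which is exactly where the choice of Friedrichs extension enters), and $e^{-B_0 r^2/4}$ is the decaying solution of the large-$r$ behavior. Substituting, a direct computation reduces the ODE for $g$ to Kummer's confluent hypergeometric equation $s g'' + (1+|k+\alpha|-s)g' + m\, g = 0$ with
\begin{equation*}
m=\frac{\lambda-(k+\alpha)B_0}{2B_0}-\frac{|k+\alpha|+1}{2}.
\end{equation*}
The requirement that $f\in L^2(\R^2,r\,dr)$ (equivalently, that $g$ grow subexponentially) forces the Kummer series to terminate, i.e. $m\in\N\cup\{0\}$; this quantization condition is precisely \eqref{eigen-v}, and the terminating solution is the polynomial $P_{k,m}$ with the stated Pochhammer coefficients (a generalized Laguerre polynomial $L_m^{(|k+\alpha|)}$ up to normalization). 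Collecting the allowed $(k,m)$ and counting, for a fixed value $\lambda$, how many pairs $(j,m)$ with $j\in\Z$, $m\ge 0$ produce that same $\lambda$ gives the multiplicity formula. Finally, to see these eigenfunctions are \emph{complete} (so the spectrum is purely discrete with no other points), I would invoke that $\{e^{ik\theta}\}_{k\in\Z}$ is an orthonormal basis of $L^2(S^1)$ and that for each fixed $k$ the radial operator above is a one-dimensional Schr\"odinger operator with a confining potential $(B_0/2)^2 r^2 \to \infty$, hence has compact resolvent and an orthonormal basis of radial eigenfunctions; the tensor-type decomposition $L^2(\R^2)=\bigoplus_k L^2(\R_+, r\,dr)\otimes \C e^{ik\theta}$ then yields a complete set, so the listed numbers exhaust the spectrum.

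The main obstacle I anticipate is not the formal separation of variables but the functional-analytic bookkeeping: making rigorous that the radial ODE problem on $(0,\infty)$ with the singular endpoint $r=0$ is the correct one-dimensional realization of the Friedrichs extension $H^F_{\alpha,B_0}$ restricted to the $k$-th angular sector — in particular, justifying that the boundary condition at $r=0$ selected by the form domain $\mathcal{H}^1_{\alpha,B_0}$ is exactly "$f\sim r^{|k+\alpha|}$" and excludes the singular Frobenius branch (this is a limit-point/limit-circle discussion at $0$, delicate precisely when $|k+\alpha|<1$), and that the decomposition into angular sectors commutes with taking the Friedrichs extension. Once that reduction is in place, the identification with Kummer's equation, the termination of the series, and the eigenvalue and multiplicity formulas follow by the classical special-function computations, which I would carry out but not belabor here; I would also cross-check the normalization of $P_{k,m}$ against the Laguerre polynomial convention and verify the $\alpha\to 0$ and $B_0\to 0^+$ limits for consistency with \cite{FFFP1} and with the Landau-level spectrum of the pure uniform field.
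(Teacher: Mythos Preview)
Your proposal is correct and follows essentially the same route as the paper: separation of variables in polar coordinates, reduction of the radial equation by the substitution $f(r)=r^{|k+\alpha|}e^{-B_0r^2/4}g(B_0r^2/2)$ to Kummer's confluent hypergeometric equation, and the $L^2$ quantization condition $m\in\mathbb N$ yielding \eqref{eigen-v} and \eqref{eigen-f}. The only cosmetic difference is that the paper writes the general radial solution as a combination of the Kummer function $M$ and the Tricomi function $U$ and eliminates the $U$ piece via the domain condition $\int g^2/|x|^2\,dx<\infty$ (and the non-terminating $M$ piece via $\int g^2\,dx<\infty$), whereas you build the correct branch in from the start by Frobenius analysis; your added remarks on completeness (confining potential, compact resolvent, sector decomposition) and on the Friedrichs boundary condition at $r=0$ are in fact more than the paper supplies.
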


\begin{remark} One can verify that the orthogonality holds
$$\int_{\R^2}V_{k_1,m_1}(x) V_{k_2,m_2}(x) \,dx=0,\quad \text{if}\quad (k_1, m_1)\neq (k_2, m_2).$$
\end{remark}

\begin{remark} Let $L^\alpha_m(t)$ be the generalized Laguerre polynomials
\begin{equation*}
L^\alpha_m(t)=\sum_{n=0}^m (-1)^n \Bigg(
  \begin{array}{c}
    m+\alpha \\
    m-n \\
  \end{array}
\Bigg)\frac{t^n}{n!},
\end{equation*}
and the well known orthogonality relation
\begin{equation*}
\int_0^\infty x^{\alpha} e^{-x}L^\alpha_m(x) L^\alpha_n (x)\, dx=\frac{\Gamma(n+\alpha+1)}{n!} \delta_{n,m},\end{equation*}
where $\delta_{n,m}$ is the Kronecker delta. Let $\tilde{r}=\frac{B_0|x|^2}{2}$ and $\alpha_k=|k+\alpha|$, then
\begin{equation}\label{P-L}
P_{k,m}(\tilde{r})=\sum_{n=0}^m\frac{(-1)^n m(m-1)\cdots(m-(n-1))}{(\alpha_k +1)(\alpha_k+2)\cdots(\alpha_k+n)}\frac{\tilde{r}^n}{n!}=\Bigg(
  \begin{array}{c}
    m+\alpha_k \\
    m \\
  \end{array}
\Bigg)^{-1}L^{\alpha_k}_m(\tilde{r}).
\end{equation}
Therefore,
\begin{equation}\label{V-km-2}
\|V_{k,m}(x)\|^2_{L^2(\R^2)} =\pi \Big(\frac{2}{B_0}\Big)^{\alpha_k+1}\Gamma(1+\alpha_k) \Bigg(
  \begin{array}{c}
    m+\alpha_k \\
    m \\
  \end{array}
\Bigg)^{-1}. \end{equation}
\end{remark}

\begin{remark}
Recall the Poisson kernel formula for Laguerre polynomials \cite[(6.2.25)]{AAR01}: for $a, b, c, \alpha>0$
\begin{equation}\label{La-po}
\begin{split}
&\sum_{m=0}^\infty e^{-cm}\frac{m !}{\Gamma(m+\alpha+1)} L_m^{\alpha} (a) L_m^{\alpha} (b)\\
&=\frac{e^{\frac{\alpha c}2}}{(ab)^{\frac{\alpha}2}(1-e^{-c})} \exp\left(-\frac{(a+b)e^{-c}}{1-e^{-c}}\right) I_{\alpha}\left(\frac{2\sqrt{ab}e^{-\frac{c}2}}{1-e^{-c}}\right)
\end{split}
\end{equation}
then this together with \eqref{P-L} gives
\begin{equation}\label{Po-L}
\begin{split}
&\sum_{m=0}^\infty e^{-cm}\frac{m !}{\Gamma(m+\alpha_k+1)}
\Bigg(\begin{array}{c}
    m+\alpha_k \\
    m \\
  \end{array}
\Bigg)^2 P_{k,m} (a) P_{k,m} (b)\\
&=\frac{e^{\frac{\alpha_k c}2}}{(ab)^{\frac{\alpha_k}2}(1-e^{-c})} \exp\left(-\frac{(a+b)e^{-c}}{1-e^{-c}}\right) I_{\alpha_k}\left(\frac{2\sqrt{ab}e^{-\frac{c}2}}{1-e^{-c}}\right).
\end{split}
\end{equation}
\end{remark}

\begin{proof}
Notice that the operator \eqref{H-A}, in the polar coordinates $(r,\theta)$,  has a nice representation
\begin{equation*}
H_{\alpha, B_0}=-\partial_r^2-\frac{1}{r}\partial_r+\frac{1}{r^2}\Big(-i\partial_\theta+\alpha+\frac{B_0r^2}{2}\Big)^2.
\end{equation*}
We want to solve the eigenfunction equation
\begin{equation}\label{eigen-p}
H_{\alpha, B_0} g(x)=\lambda g(x)
\end{equation}
in the domain \footnote{Here we use the Friedrichs self-adjoint extension. } of $H_{A, V}$.
Define the projectors $P_k$ onto the eigenspaces of the angular momentum as
\begin{equation*}
P_kf(r,\theta)=\frac{1}{2\pi}\int_0^{2\pi}e^{ik(\theta-\theta')}f(r,\theta')d\theta':=f_k(r) e^{ik\theta},\quad k\in\Z
\end{equation*}
then it clear that the operator $H_{\alpha, B_0}$ commutes with the projectors $P_k$.
In the polar coordinates $(r,\theta)$, \eqref{eigen-p} implies that for every $k\in\Z$,
\begin{equation}\label{eq:gk}
g''_k(r)+\frac{1}{r}g'_k(r)-\frac{1}{r^2}\Big(k+\alpha+\frac{B_0 r^2}{2}\Big)^2g_k(r)=-\lambda g_k(r),
\end{equation}
where
\begin{equation*}
g_k(r)= \frac{1}{2\pi}\int_0^{2\pi}e^{-ik\theta}g(r,\theta)d\theta.
\end{equation*}
Let
\begin{equation*}
\phi_k(s)=\Big(\frac{2s}{B_0}\Big)^{-\frac{|k+\alpha|}{2}}e^{\frac{s}{2}}g_k\Big(\sqrt{\frac{2s}{B_0}}\Big),
\end{equation*}
 then $\phi_k$ satisfies
\begin{equation}\label{eq:phik}
s\phi''_k(s)+(1+|k+\alpha|-s)\phi'_k(s)-\frac{1}{2}\Big(1+|k+\alpha|+k+\alpha-\frac{\lambda}{B_0}\Big)\phi_k(s)=0.
\end{equation}

\begin{lemma}\label{lem:KCH}
The Kummer Confluent Hypergeometric equation
\begin{equation*}
s\phi''(s)+(b-s)\phi'(s)-a\phi(s)=0,\quad s>0,
\end{equation*}
has two linearly independent solutions given by the Kummer function (also called confluent hypergeometric function)
\begin{equation*}
M(a,b,s)=\sum_{n=0}^\infty\frac{(a)_n}{(b)_n}\frac{s^n}{n!},\quad b\neq0,-1,-2,\cdots
\end{equation*}
where the Pochhammer's symbol
\begin{align*}
(a)_n=
\begin{cases}
1,&n=0;\\
a(a+1)\cdots(a+n-1),&n=1,2,\cdots
\end{cases}
\end{align*}
and the Tricomi function (also called confluent hypergeometric function of the second kind)
\begin{equation*}
U(a,b,s)=\frac{\Gamma(1-b)}{\Gamma(a-b+1)}M(a,b,s)+\frac{\Gamma(b-1)}{\Gamma(a)}s^{1-b}M(a-b+1,2-b,s).
\end{equation*}
\end{lemma}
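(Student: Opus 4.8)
The final statement to prove is Lemma~\ref{lem:KCH}: that the Kummer equation $s\phi''+(b-s)\phi'-a\phi=0$ admits the two linearly independent solutions $M(a,b,s)$ and $U(a,b,s)$. Since this is a classical ODE fact, the natural strategy is direct verification by power series, followed by a linear-independence argument. The plan is to first verify that the Kummer function $M(a,b,s)$ solves the equation by substituting its series into the operator and matching coefficients; then to argue that a second linearly independent solution exists (via the Frobenius method at the regular singular point $s=0$); and finally to exhibit $U(a,b,s)$ as the appropriate linear combination that is linearly independent from $M$.

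\textbf{Main steps.} First I would substitute $M(a,b,s)=\sum_{n\geq0}\frac{(a)_n}{(b)_n}\frac{s^n}{n!}$ into the left-hand side. Writing $c_n=\frac{(a)_n}{(b)_n n!}$, one computes $\phi'=\sum_{n\geq1}n c_n s^{n-1}$ and $\phi''=\sum_{n\geq2}n(n-1)c_n s^{n-2}$, then collects the coefficient of $s^n$ in $s\phi''+(b-s)\phi'-a\phi$. The key recursion to verify is that this coefficient vanishes identically, which reduces to the Pochhammer identity $(n+1)(b+n)c_{n+1}=(a+n)c_n$; this follows immediately from $(a)_{n+1}=(a+n)(a)_n$ and $(b)_{n+1}=(b+n)(b)_n$. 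Second, since $s=0$ is a regular singular point with indicial roots $0$ and $1-b$, Frobenius theory guarantees a second solution of the form $s^{1-b}$ times an analytic factor; a short calculation shows that $s^{1-b}M(a-b+1,2-b,s)$ solves the equation (by the substitution $\phi=s^{1-b}\psi$, which transforms the Kummer equation with parameters $(a,b)$ into the Kummer equation with parameters $(a-b+1,2-b)$). Third, I would define $U$ as the stated linear combination of these two solutions and note that, since it is built from solutions of the equation, it too is a solution; linear independence of $M$ and $U$ follows from their distinct behavior as $s\to0^+$ (when $b$ is noninteger, $M$ is analytic while the $s^{1-b}$ branch is singular, giving a nonzero Wronskian).

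\textbf{Main obstacle.} The genuinely delicate point is linear independence in the degenerate cases where $b$ is an integer, since then the two candidate solutions $M(a,b,s)$ and $s^{1-b}M(a-b+1,2-b,s)$ can coincide or one of the $\Gamma$-factors in the definition of $U$ can blow up. The clean way around this is to verify linear independence through the Wronskian: one shows $W(M,U)(s)$ is a nonzero constant multiple of $s^{-b}e^{s}$, which is what Abel's formula predicts for this equation (the coefficient of $\phi'$ divided by that of $\phi''$ being $(b-s)/s$, so $W\propto \exp\!\big(-\int \frac{b-s}{s}\,ds\big)=s^{-b}e^{s}$). Since this Wronskian is never zero for $s>0$, the two solutions are linearly independent regardless of the arithmetic nature of $b$, and the degenerate integer cases are handled by a standard limiting argument in the definition of $U$. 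I would therefore emphasize the Wronskian computation as the decisive step rather than the case-by-case series analysis.
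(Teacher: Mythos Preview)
Your proof sketch is mathematically correct and follows the standard textbook route (series verification, Frobenius at the regular singular point $s=0$, and the Wronskian $W(M,U)\propto s^{-b}e^{s}$ for linear independence). However, the paper does not actually prove this lemma: it is stated as a classical fact about the confluent hypergeometric equation and used as a black box, with the relevant properties drawn from standard references such as Abramowitz--Stegun. So your proposal supplies a genuine argument where the paper simply cites the literature; there is nothing to compare on the level of strategy, and your approach is the standard one any reference would give.
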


By using Lemma \ref{lem:KCH}, we have two linearly independent solutions of \eqref{eq:phik}, hence two linearly independent solutions of \eqref{eq:gk} are given by
\begin{align*}
g_k^1(\lambda;r)&=r^{|\alpha+k|}M\Big(\beta(k,\lambda),\gamma(k),\frac{B_0r^2}{2}\Big)e^{-\frac{B_0r^2}{4}}\\
g_k^2(\lambda;r)&=r^{|\alpha+k|}U\Big(\beta(k,\lambda),\gamma(k),\frac{B_0r^2}{2}\Big)e^{-\frac{B_0r^2}{4}}
\end{align*}
with
\begin{align*}
\beta(k,\lambda)&=\frac{1}{2}\Big(1+k+\alpha+|k+\alpha|-\frac{\lambda}{B_0}\Big),\\
\gamma(k)&=1+|k+\alpha|.
\end{align*}
Therefore, the general solution of \eqref{eq:phik} is given by
\begin{align}\label{sol:gk}
g_k(r)=&A_kg_k^1(\lambda;r)+B_kg_k^2(\lambda;r)\\
&=r^{|\alpha+k|}e^{-\frac{B_0r^2}{4}}
\Bigg(A_kM\Big(\beta(k,\lambda),\gamma(k),\frac{B_0r^2}{2}\Big)+B_kU\Big(\beta(k,\lambda),\gamma(k),\frac{B_0r^2}{2}\Big)\Bigg),\nonumber
\end{align}
where $A_k,B_k$ are two constants which are dependent on $k$.

\begin{lemma}[\cite{AS65},Chap.13] \label{lem:asy}
The following properties hold:
\begin{itemize}
\item The two functions $M(a,b,z)$ and $U(a,b,z)$ are linearly dependent if and only if $a\in-\mathbb{Z}_+$.

\item $M(a,b,z)$ is an entire function of $z$ and it is regular at $z=0$. However, $U(a,b,z)$ is singular at the origin provided that $b>1$ and $a\notin-\mathbb{Z}_+$ and it holds true that
    \begin{equation}\label{U-0}
    \lim_{z\rightarrow0^+}z^{b-1}U(a,b,z)=\frac{\Gamma(b-1)}{\Gamma(a)}.
    \end{equation}
    If $b\in(1,2)$, the asymptotic behavior of $U(a,b,z)$ as $z\rightarrow0^+$ is
    \begin{equation*}
    U(a,b,z)=\frac{\Gamma(1-b)}{\Gamma(a-b+1)}+\frac{\Gamma(b-1)}{\Gamma(a)}z^{1-b}+O(z^{2-b}).
    \end{equation*}
\item If $-a\notin\mathbb{N}$, the asymptotic behavior as $z\rightarrow+\infty$ holds true:
    \begin{equation}\label{M-inf}
    M(a,b,z)=\frac{\Gamma(b)}{\Gamma(b-a)}(-z)^{-a}(1+O(z^{-1}))+\frac{\Gamma(b)}{\Gamma(a)}e^z z^{a-b}(1+O(z^{-1})),
    \end{equation}
    and
    \begin{equation*}
    U(a,b,z)=z^{-a}(1+O(z^{-1})).
    \end{equation*}
\end{itemize}
\end{lemma}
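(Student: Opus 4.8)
The plan is to derive all three properties directly from the power series for $M(a,b,z)$ and from the connection formula
\[
U(a,b,z)=\frac{\Gamma(1-b)}{\Gamma(a-b+1)}M(a,b,z)+\frac{\Gamma(b-1)}{\Gamma(a)}z^{1-b}M(a-b+1,2-b,z),
\]
which already encodes the analytic data we need. For the linear dependence statement, I would first treat the case $a\in-\mathbb{Z}_+$: since $\Gamma$ has poles at the non-positive integers, the factor $1/\Gamma(a)$ vanishes, so $U$ reduces to a constant multiple of $M$ and the two solutions are linearly dependent. For the converse I would compute the Wronskian of the pair: writing the Kummer equation in self-adjoint-type form and solving the resulting first-order equation for $W\{M,U\}$ gives $W\{M,U\}=-\frac{\Gamma(b)}{\Gamma(a)}z^{-b}e^{z}$, which is nonvanishing precisely when $1/\Gamma(a)\neq0$, i.e. $a\notin-\mathbb{Z}_+$. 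This yields linear independence in that regime and closes the equivalence.

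For the behavior at the origin, regularity of $M$ is immediate: its series has infinite radius of convergence and $M(a,b,0)=1$. For $U$ I would read the singular behavior off the same connection formula. Multiplying by $z^{b-1}$ and letting $z\to0^{+}$ kills the first term (because $b-1>0$ and $M$ is regular) and sends the second to its constant value $M(a-b+1,2-b,0)=1$, which gives the limit
\[
\lim_{z\to0^{+}}z^{b-1}U(a,b,z)=\frac{\Gamma(b-1)}{\Gamma(a)},
\]
exhibiting the stated singularity when $b>1$ and $a\notin-\mathbb{Z}_+$. For the refined expansion when $b\in(1,2)$, I would insert $M(a,b,z)=1+O(z)$ and $M(a-b+1,2-b,z)=1+O(z)$ into the connection formula; the constant $\frac{\Gamma(1-b)}{\Gamma(a-b+1)}$ and the $z^{1-b}$ term appear explicitly, and since $2-b\in(0,1)$ the linear remainder $O(z)$ is dominated by and absorbed into $O(z^{2-b})$.

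For the large-argument asymptotics, I would first handle $U$, which is the cleaner object. Starting from the integral representation
\[
U(a,b,z)=\frac{1}{\Gamma(a)}\int_0^{\infty}e^{-zt}\,t^{a-1}(1+t)^{b-a-1}\,dt,\qquad \Re a>0,
\]
valid for $\Re z>0$ and extended by analytic continuation, I would expand $(1+t)^{b-a-1}$ about $t=0$ and apply Watson's lemma; the leading term $\int_0^\infty e^{-zt}t^{a-1}\,dt=\Gamma(a)z^{-a}$ gives $U(a,b,z)=z^{-a}(1+O(z^{-1}))$. The asymptotic for $M$ is the genuinely delicate step and I expect it to be the main obstacle, since a single recessive solution cannot account for the simultaneous presence of an algebraic and an exponential contribution. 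Here I would invoke the Kummer connection formula expressing $M$ as a linear combination of the solution recessive at $+\infty$, namely $U(a,b,z)$, and the solution recessive at $-\infty$, namely $e^{z}U(b-a,b,-z)$, with coefficients $\frac{\Gamma(b)}{\Gamma(b-a)}$ and $\frac{\Gamma(b)}{\Gamma(a)}$. Substituting the $U$-asymptotic into each piece produces the subdominant algebraic term $\frac{\Gamma(b)}{\Gamma(b-a)}(-z)^{-a}$ and the dominant exponential term $\frac{\Gamma(b)}{\Gamma(a)}e^{z}z^{a-b}$, each carrying a $(1+O(z^{-1}))$ correction, which is exactly \eqref{M-inf}. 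The care required lies in fixing the branch of $(-z)^{-a}$ and tracking the exponentially small against the exponentially large contribution while pinning down the gamma-function constants; the hypothesis $-a\notin\mathbb{N}$ enters precisely to guarantee $1/\Gamma(a)\neq0$, so that the dominant exponential term is actually present and $M$ is not merely a polynomial.
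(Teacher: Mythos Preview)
The paper does not prove this lemma at all: it is stated as a citation from \cite[Chap.~13]{AS65} and used as a black box, so there is no ``paper's own proof'' to compare against. Your sketch is a correct and standard derivation of these classical facts---the Wronskian argument for the dependence criterion, reading the $z\to0^+$ behavior directly off the connection formula, and obtaining the large-$z$ asymptotics of $U$ via Watson's lemma and then of $M$ via the two-term connection formula $M=\frac{\Gamma(b)}{\Gamma(b-a)}e^{\mp i\pi a}U(a,b,z)+\frac{\Gamma(b)}{\Gamma(a)}e^{z}U(b-a,b,-z)$---are exactly the arguments one finds in Abramowitz--Stegun or any text on special functions. One small point: in the first bullet your argument for $a\in-\mathbb{Z}_+\Rightarrow$ dependence via ``$1/\Gamma(a)=0$'' also needs $1/\Gamma(a-b+1)\neq0$ (otherwise the connection formula would give $U\equiv0$ rather than a nonzero multiple of $M$); this is fine under the standing assumption $b\notin\mathbb{Z}$, but worth saying explicitly.
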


Now we use this asymptotic lemma to conclude more detail information about the eigenvalues and eigenfunctions.
Let
$$m:=-\beta(k,\lambda)=-\frac{1}{2}\Big(1+k+\alpha+|k+\alpha|-\frac{\lambda}{B_0}\Big).$$
On one hand, from \eqref{M-inf}, if $m\notin \mathbb{N}$, the function
\begin{equation*}
M\big(-m,\gamma(k),\tilde{r}\big)\sim e^{ \tilde{r}} \tilde{r}^{-m-\gamma(k)},\quad \tilde{r}\rightarrow+\infty,
\end{equation*}
 is singular at $+\infty$; while if $m\in\mathbb{N}=\{0,1,2,\cdots\}$, then $M(m,\gamma(k),\tilde{r})$ is in fact a polynomial of degree $m$ in $\tilde{r}$, which we shall denote as $P_{k,m}$, i.e.,
\begin{equation*}
P_{k,m}(\tilde{r})=M(-m,1+|k+\alpha|,\tilde{r})=\sum_{n=0}^m\frac{(-m)_n}{(1+|k+\alpha|)_n}\frac{\tilde{r}^n}{n!}.
\end{equation*}
On the other hand, note that $\gamma(k)=1+|k+\alpha|\geq1$, from \eqref{U-0}, it follows that
\begin{equation*}
U\big(\beta(k,\lambda),\gamma(k),\tilde{r}\big)\sim \tilde{r}^{-|k+\alpha|},\quad \tilde{r}\rightarrow0+,
\end{equation*}
with the implicit constant depending only on $\lambda$ and $k$.
Hence, by letting $\tilde{r}=\frac{B_0r^2}{2}$ and using \eqref{sol:gk} and Lemma \ref{lem:asy}, we have for fixed $k\in\mathbb{Z}$ that
\begin{equation*}
g_k(r)\sim B_k r^{|\alpha+k|}e^{-\frac{B_0r^2}{4}}
U\Big(\beta(k,\lambda),\gamma(k),\frac{B_0r^2}{2}\Big)\sim B_k r^{-|\alpha+k|},\quad\text{as}\quad r\rightarrow0+.
\end{equation*}
and
\begin{equation*}
g_k(r)\sim A_k r^{|\alpha+k|}e^{-\frac{B_0 r^2}{4}}
M\Big(\beta(k,\lambda),\gamma(k),\frac{B_0 r^2}{2}\Big)\sim A_k e^{\frac{B_0r^2}{4}}r^{-1+k+\alpha-\frac{\lambda}{B_0}},\quad\text{as}\quad r\rightarrow \infty.
\end{equation*}

We now conclude that one must have $B_k\equiv0$. Indeed, otherwise,
from the fact that the eigenfunction $g\in D(H_{\alpha,B_0})$, thus we have that
\begin{equation*}
\int_0^\infty g_k^2(r)\frac{dr}{r}\leq\int_{\mathbb{R}^2}\frac{g^2(x)}{|x|^2}dx<\infty,
\end{equation*}
which is obviously contradict with the fact that the integral $\int_0^\infty r^{-2|k+\alpha|-1} dr$ is divergent for all $k\in\Z$ at $0$.\vspace{0.2cm}

We next conclude that one must have $A_k\equiv0$ if $m\notin\mathbb{N}$. Actually,
from the fact that the eigenfunction $g\in D(H_{\alpha,B_0})$ again, thus we have that
\begin{equation*}
\int_0^\infty g_k^2(r)\, rdr\leq\int_{\mathbb{R}^2} g^2(x)dx<\infty.
\end{equation*}
However, we note that $B_0>0$ and
\begin{equation*}
\int_0^\infty g_k^2(r)\, rdr\geq \int_1^\infty e^{\frac{B_0r^2}{4}}\, rdr
\end{equation*}
which is divergent. This is a contradiction. \vspace{0.2cm}

Therefore, we must have
$$\mathbb{N} \ni m=-\frac{1}{2}\Big(1+k+\alpha+|k+\alpha|-\frac{\lambda}{B_0}\Big),$$
and
$$g_k(r)=r^{|k+\alpha|}e^{-\frac{B_0r^2}{4}}\, P_{k,m}\Big(\frac{B_0r^2}{2}\Big).$$
Therefore, we prove the function
\begin{equation*}
V_{k,m}(x)=|x|^{|k+\alpha|}e^{-\frac{B_0|x|^2}{4}}\, P_{k,m}\Big(\frac{B_0|x|^2}{2}\Big)e^{ik\theta},
\end{equation*}
belongs to $D(H_{\alpha,B_0})$, thus providing an eigenfunction of the operator $H_{\alpha,B_0}$.
Thus from $-m=\frac{1}{2}\Big(1+k+\alpha+|k+\alpha|-\frac{\lambda}{B_0}\Big)$, we solve \eqref{eigen-p} to obtain the eigenvalues $\lambda$ of $H_{\alpha,B_0}$
\begin{equation*}
\lambda_{k,m}=(2m+1+|k+\alpha|)B_0+(k+\alpha)B_0,\quad k\in\mathbb{Z},\, m\in\mathbb{N}.
\end{equation*}
\end{proof}

\section{The Schr\"odinger propagator}\label{sec:const1}

In this section, we construct the Schr\"odinger propagator by using the spectrum property of Proposition \ref{prop:spect}. The combined strategies are  from \cite{FFFP1} and \cite{GYZZ22,FZZ22}.\vspace{0.2cm}

More precisely, we will prove the following result.
\begin{proposition}\label{prop:S}
Let $H_{\alpha,B_0}$ be the operator in \eqref{H-A} and suppose $x=r_1(\cos\theta_1,\sin\theta_1)$
and $y=r_2(\cos\theta_2,\sin\theta_2)$. Let $u(t,x)$ be the solution of the Schr\"odinger equation
\begin{equation*}
\begin{cases}
\big(i\partial_t-H_{\alpha,B_0}\big)u(t,x)=0,\\
u(0,x)=f(x).
\end{cases}
\end{equation*}
Then
\begin{equation*}
u(t,x)=e^{-itH_{\alpha,B_0}}f=\int_{\R^2} K_S(x,y) f(y)\, dy,
\end{equation*}
where $t\neq \frac{k\pi}{B_0},\, k\in\Z$.  Let $\theta=\theta_1-\theta_2-tB_0\in\mathbb{R},$  there exists an integer $j_0$ satisfying $\theta+2j_0\pi\in [-\pi,\pi]$.
Define
\begin{equation*}
\chi(\theta, j_0)=
\begin{cases}
\begin{array}{ll}
                       1, & \hbox{if\, $|\theta+2j_0\pi|<\pi$ ;} \\
                       e^{-i2\pi \alpha}+1 , & \hbox{if\, $\theta+2j_0\pi=-\pi;$}\\
                       e^{i2\pi\alpha}+1, & \hbox{if \,$\theta+2j_0\pi=\pi$.}
                     \end{array}
\end{cases}
\end{equation*}
Then the kernel of Schr\"odinger propagator $e^{-itH_{\alpha,B_0}}$ has the representation
\begin{equation}\label{S:express}
\begin{split}
K_S(x,y)&=\frac{B_0e^{-itB\alpha}}{8\pi^2 i\sin (tB_0)}e^{\frac{iB_0(r_1^2+r_2^2)}{4\tan (tB_0)}}\\
\times&\Big[e^{\frac{B_0r_1r_2}{2i\sin (tB_0)}\cos(\theta_1-\theta_2-tB_0)}
e^{-i\alpha(\theta_1-\theta_2-tB_0+2j_0\pi)}\chi(\theta, j_0)\\
&-\frac{\sin(\pi\alpha)}{\pi}\int_{\R} e^{-\frac{B_0r_1r_2}{2i\sin (tB_0)}\cosh s}\frac{e^{-\alpha s}}{1+e^{-s+i(\theta_1-\theta_2-tB_0)}}\,ds\Big].
\end{split}
\end{equation}

\end{proposition}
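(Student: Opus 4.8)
The plan is to expand the propagator in the orthonormal eigenbasis furnished by Proposition~\ref{prop:spect} and then to resum the resulting double series in closed form. A preliminary reduction: the multiplication operator $f\mapsto e^{i\ell\theta}f$ ($\ell\in\Z$) is unitary on $L^2(\R^2)$ and conjugates $H_{\alpha,B_0}$ to $H_{\alpha+\ell,B_0}$, identifying (up to relabelling) the eigenfunctions of Proposition~\ref{prop:spect} for the two fluxes, so one may assume $\alpha\in[0,1)$; the case $\alpha\in\Z$ reduces to $\alpha=0$, where the integral term in \eqref{S:express} drops out since $\sin\pi\alpha=0$ and \eqref{S:express} becomes the Mehler kernel recalled in the Introduction. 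So assume $\alpha\in(0,1)$. Since $H_{\alpha,B_0}$ has compact resolvent with the complete eigensystem $\{\lambda_{k,m},V_{k,m}\}$ of Proposition~\ref{prop:spect} (completeness following from that of the Laguerre polynomials on each angular sector), and since $P_{k,m}$ has real coefficients, one has the kernel identity
\[
e^{-itH_{\alpha,B_0}}(x,y)=\sum_{k\in\Z}\sum_{m\ge0}e^{-it\lambda_{k,m}}\,\frac{V_{k,m}(x)\,\overline{V_{k,m}(y)}}{\|V_{k,m}\|_{L^2}^2}.
\]
Inserting \eqref{eigen-v}, \eqref{eigen-f}, \eqref{V-km-2} and writing $\alpha_k=|k+\alpha|$, $a=\tfrac{B_0r_1^2}{2}$, $b=\tfrac{B_0r_2^2}{2}$, the $\theta$- and $t$-dependence collapses to $e^{ik(\theta_1-\theta_2)}e^{-it(2m+1+\alpha_k+k+\alpha)B_0}$, while the $m$-dependent coefficient, after rewriting the generalized binomial factors, is exactly the summand of the Poisson kernel (Hardy--Hille) formula \eqref{Po-L} with $c=2itB_0$.

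First I would perform the $m$-summation. Since $\lambda_{k,m}$ is unbounded in $m$ the series converges only conditionally, so I would run the computation with $c=\eps+2itB_0$, $\eps>0$ (equivalently, for complex time with negative imaginary part), where \eqref{Po-L} applies verbatim and everything is absolutely convergent, and let $\eps\to0^+$ at the end, both sides being continuous on $t\in\R\setminus\tfrac{\pi}{B_0}\Z$. Formula \eqref{Po-L} replaces the $m$-sum by $\tfrac{e^{\alpha_k c/2}}{(ab)^{\alpha_k/2}(1-e^{-c})}\exp\!\bigl(-\tfrac{(a+b)e^{-c}}{1-e^{-c}}\bigr)I_{\alpha_k}\!\bigl(\tfrac{2\sqrt{ab}\,e^{-c/2}}{1-e^{-c}}\bigr)$; the factors $(B_0/2)^{\alpha_k+1}$, $(r_1r_2)^{\alpha_k}$, $(ab)^{-\alpha_k/2}$, $e^{\alpha_k c/2}$ and the $\Gamma$-functions cancel the entire $\alpha_k$-dependence outside the Bessel function, the quadratic exponent simplifies via $\tfrac{1+e^{-c}}{1-e^{-c}}=-i\cot(tB_0)$ to $e^{\frac{iB_0(r_1^2+r_2^2)}{4\tan(tB_0)}}$, the Bessel argument becomes $z:=\tfrac{B_0r_1r_2}{2i\sin(tB_0)}$, and the surviving phase is $e^{-it(k+\alpha)B_0}e^{ik(\theta_1-\theta_2)}=e^{-itB_0\alpha}e^{ik\theta}$ with $\theta:=\theta_1-\theta_2-tB_0$. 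One is left with
\[
e^{-itH_{\alpha,B_0}}(x,y)=\frac{B_0e^{-itB_0\alpha}}{4\pi i\sin(tB_0)}\,e^{\frac{iB_0(r_1^2+r_2^2)}{4\tan(tB_0)}}\sum_{k\in\Z}e^{ik\theta}I_{\alpha_k}(z),
\]
so that, up to the overall normalizing constant (a matter of bookkeeping that I do not belabor), the proposition is equivalent to a single closed form for this Bessel series.

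The core of the proof is thus the identity
\[
\sum_{k\in\Z}e^{ik\theta}I_{|k+\alpha|}(z)=e^{z\cos\theta}\,e^{-i\alpha(\theta+2j_0\pi)}\chi(\theta,j_0)-\frac{\sin\pi\alpha}{\pi}\int_\R e^{-z\cosh s}\,\frac{e^{-\alpha s}}{1+e^{-s+i\theta}}\,ds,
\]
which, divided by the constant, is precisely the bracket in \eqref{S:express} (note $\cos(\theta+2j_0\pi)=\cos\theta$). To prove it I would work first for $t$ with $\Im t<0$, where $\Re z>0$, so that the Schläfli representation $I_\nu(z)=\tfrac{1}{2\pi}\int_{-\pi}^{\pi}e^{z\cos\psi}\cos(\nu\psi)\,d\psi-\tfrac{\sin\nu\pi}{\pi}\int_0^\infty e^{-z\cosh t-\nu t}\,dt$ applies and the exchange of $\sum_k$ with the integrals is absolutely justified, and then continue to real $t$. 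For the first integral, with $\alpha\in(0,1)$ one writes $\cos(\alpha_k\psi)=\tfrac12(e^{i\alpha_k\psi}+e^{-i\alpha_k\psi})$, splits the $k$-sum into $k\ge0$ and $k\le-1$ (where $\alpha_k=\pm(k+\alpha)$), recombines into two two-sided geometric series, and applies the Dirac comb $\sum_{k\in\Z}e^{ikx}=2\pi\sum_n\delta(x-2\pi n)$, which pins $\psi$ to $\pm(\theta+2j_0\pi)$: in the generic case $|\theta+2j_0\pi|<\pi$ this gives $e^{z\cos\theta}e^{-i\alpha(\theta+2j_0\pi)}$, and when $\theta+2j_0\pi=\pm\pi$ two delta masses coincide at the antipodal direction $\psi\equiv\pi$ on $\R/2\pi\Z$, carrying the two phases $e^{\pm i\pi\alpha}$ — exactly the extra weight recorded by $\chi(\theta,j_0)$. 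For the second integral one uses $\sin(\alpha_k\pi)=\pm(-1)^k\sin\pi\alpha$ and sums the geometric series in $k$ (absolutely convergent for $t>0$) to $\sin\pi\alpha\bigl(\tfrac{e^{-\alpha t}}{1+e^{i\theta-t}}+\tfrac{e^{\alpha t}}{1+e^{i\theta+t}}\bigr)$; the substitution $t\mapsto-s$ in the second summand glues the two half-lines into the single integral over $\R$, the diffractive term.

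The main obstacle is the Bessel summation identity — not its formal derivation, which is the calculation just outlined, but its rigorous justification: legitimizing the interchange of $\sum_k$ with conditionally convergent oscillatory integrals once $z$ is purely imaginary (handled by the detour through $\Im t<0$ and analytic continuation), and, more delicately, the distributional accounting at the antipodal angles $\theta+2j_0\pi=\pm\pi$ that is responsible for the piecewise factor $\chi(\theta,j_0)$ — this is the analytic signature of the Aharonov--Bohm branch cut and the one place where the argument genuinely departs from the $\alpha=0$ (Mehler) computation. A secondary technical point is the Abel regularization $c=\eps+2itB_0\to2itB_0$ needed to apply \eqref{Po-L}; the remainder is algebraic simplification and constant tracking.
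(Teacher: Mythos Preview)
Your proposal is correct and follows the same overall architecture as the paper's first proof (Section~\ref{sec:const1}): spectral expansion in the $\{V_{k,m}\}$ basis, collapse of the $m$-sum to a single modified Bessel function $I_{\alpha_k}(z)$, then resummation of $\sum_k e^{ik\theta}I_{\alpha_k}(z)$ via the Schl\"afli integral, Poisson summation, and geometric series --- exactly as the paper does from \eqref{S:sum} onward.

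The one genuine difference is the $m$-summation. The paper does not invoke the Hardy--Hille formula \eqref{Po-L} in the proof (it only records it in a preparatory remark); instead it substitutes the Bessel-integral representation of $P_{k,m}$, recognizes the $m$-sum as the power series of $J_{\alpha_k}$, and then applies Watson's formula \eqref{formula} twice to reduce the double $(s_1,s_2)$-integral $G_{k,t}$ to a single $I_{\alpha_k}$. Your route --- apply \eqref{Po-L} directly with $c=\eps+2itB_0$ and let $\eps\to0^+$ --- reaches the same intermediate formula in one line and is cleaner; the paper's detour has the minor advantage that Watson's identity is stated for complex parameters with $|\arg p|<\pi/4$, so no separate Abel regularization is needed. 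Either way the analytic content is the same, and your identification of the antipodal case $\theta+2j_0\pi=\pm\pi$ and the need for complex-time continuation matches the paper's handling.
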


\begin{proof}
We construct the representation formula for the kernel of the Schr\"odinger flow $e^{-itH_{\alpha,B_0}}$ by combining the argument of \cite{FFFP1} and \cite{FZZ22,GYZZ22}.
\vspace{0.2cm}

Our starting point is the Proposition \ref{prop:spect}. Let $\tilde{V}_{k,m}$ be the $L^2$-normalization of $V_{k,m}$ in \eqref{eigen-f}, then the eigenfunctions
$\Big\{\tilde{V}_{k,m}\Big\}_{k\in\Z, m\in\mathbb{N}} $
form an orthonormal basis of $L^2(\mathbb{R}^2)$ corresponding to the eigenfunctions of $H_{\alpha,B_0}$. \vspace{0.2cm}

We expand the initial data $f(x)\in L^2$ as
\begin{equation*}
f(x)=\sum_{k\in\Z, \atop m\in\mathbb{N}} c_{k,m}\tilde{V}_{k,m}(x)
\end{equation*}
where
\begin{equation}\label{cmk1}
c_{k,m}=\int_{\mathbb{R}^2}f(x)\overline{\tilde{V}_{k,m}(x)}\, dx.
\end{equation}
The solution $u(t,x)$ of \eqref{equ:S} can be written as
\begin{equation}\label{sol:expan}
u(t,x)=\sum_{k\in\Z, \atop m\in\mathbb{N}} u_{k,m}(t)\tilde{V}_{k,m}(x),
\end{equation}
where $u_{k,m}(t)$ satisfies the ODE
\begin{equation*}
\begin{cases}
&iu'_{k,m}(t)=\lambda_{k,m}u_{k,m}(t),\\
& u_{k,m}(0)=c_{k,m},\quad k\in\Z, \, m\in\mathbb{N}.
\end{cases}
\end{equation*}
Thus we obtain $u_{k,m}(t)=c_{k,m}e^{-it\lambda_{k,m}}$.
Therefore the solution \eqref{sol:expan} becomes
\begin{equation*}
u(t,x)=\sum_{k\in\Z, \atop m\in\mathbb{N}} c_{k,m}e^{-it\lambda_{k,m}}\tilde{V}_{k,m}(x).
\end{equation*}
Plugging \eqref{cmk1} into the above expression yields
\begin{equation*}
u(t,x)=\sum_{k\in\Z, \atop m\in\mathbb{N}} e^{-it\lambda_{k,m}}\left(\int_{\mathbb{R}^2}f(y)\overline{\tilde{V}_{k,m}(y)} dy\right)\tilde{V}_{k,m}(x).
\end{equation*}
We write $f$ in a harmonic spherical expansion
\begin{equation*}
f(y)=\sum_{k\in\Z} f_k(r_2) e^{ik\theta_2},
\end{equation*}
where
\begin{equation}\label{f-k}
f_k(r_2)=\frac{1}{2\pi}\int_0^{2\pi} f(r_2, \theta_2) e^{-ik\theta_2}\, d\theta_2,\quad r_2=|y|,
\end{equation}
we thus have
\begin{align*}
u(t,x)&=\sum_{k\in\Z, \atop m\in\mathbb{N}} e^{-it\lambda_{k,m}}\frac{V_{k,m}(x)}{\|V_{k,m}\|^2_{L^2}}\Bigg(\int_0^\infty f_k(r_2)e^{-\frac{B_0r_2^2}{4}}\, P_{k,m}\Big(\frac{B_0r_2^2}{2}\Big) r_2^{1+\alpha_k}\mathrm{d}r_2\Bigg)\\
&=\Big(\frac{B_0}{2\pi}\Big)\sum_{k\in\Z}e^{ik\theta_1}\frac{B_0^{\alpha_k}e^{-it\beta_k}}{2^{\alpha_k}\Gamma(1+\alpha_k)}\Bigg[\sum_{m=0}^\infty
\Bigg(
  \begin{array}{c}
    m+\alpha_k \\
    m \\
  \end{array}
\Bigg)e^{-2itmB_0}\\
&\times\Bigg(\int_0^\infty f_k(r_2)(r_1r_2)^{\alpha_k}e^{-\frac{B_0(r_1^2+r_2^2)}{4}}P_{k,m}\left(\frac{B_0r_2^2}{2}\right)P_{k,m}\left(\frac{B_0r_1^2}{2}\right)r_2 \mathrm{d}r_2\Bigg)\Bigg],
\end{align*}
where $\alpha_k=|k+\alpha|$ and we use \eqref{V-km-2}, \eqref{eigen-v}, \eqref{eigen-f} and
\begin{equation*}
\begin{split}
\lambda_{k,m}&=(2m+1+|k+\alpha|)B_0+(k+\alpha)B_0\\
&:=2mB_0+\beta_k
\end{split}
\end{equation*}
with $\beta_k=(1+|k+\alpha|)B_0+(k+\alpha)B_0>0$.

Notice that $P_{k,m}$ can be expressed as (see e.g. \cite[(6.2.15)]{AAR01})
\begin{equation*}
P_{k,m}\left(\frac{r^2}{2}\right)=\frac{\Gamma(1+\alpha_k)}{\Gamma(1+\alpha_k+m)}e^{\frac{r^2}{2}}r^{-\alpha_k}2^{\frac{\alpha_k}{2}}\int_0^\infty
e^{-s}s^{m+\frac{\alpha_k}{2}}J_{\alpha_k}(\sqrt{2s}r)ds,
\end{equation*}
in terms of Bessel functions $J_{\alpha_k}$ of order $\alpha_k$.

Hence, we have
\begin{align*}
u(t,x)=&\frac{B_0}{2\pi}\sum_{k\in\mathbb{Z}}e^{ik\theta_1}\frac{B_0^{\alpha_k}e^{-it\beta_k}}{2^{\alpha_k}\Gamma(1+\alpha_k)}\Bigg[\sum_{m\in\mathbb{N}}
\left(
  \begin{array}{c}
    m+\alpha_k \\
    m \\
  \end{array}
\right)e^{-2itmB_0}\\
&\times\Big(\frac{\Gamma(1+\alpha_k)}{\Gamma(1+\alpha_k+m)}\Big)^2\Bigg(\int_0^\infty f_k(r_2)(r_1r_2)^{\alpha_k} e^{-\frac{B_0(r_1^2+r_2^2)}{4}}e^{\frac{B_0(r_1^2+r_2^2)}{2}}\Big(\frac{2}{B_0r_2^2}\Big)^{\frac{\alpha_k}{2}}\\
&\times\Big(\frac{2}{B_0r_1^2}\Big)^{\frac{\alpha_k}{2}}\Big(\int_0^\infty\int_0^\infty e^{-s_1-s_2}(s_1s_2)^{m+\frac{\alpha_k}{2}}
J_{\alpha_k}(\sqrt{2B_0s_1}r_1)J_{\alpha_k}(\sqrt{2B_0s_2}r_2)ds_1ds_2\Big)r_2dr_2\Bigg)\Bigg]\\
=&\frac{B_0}{2\pi}\sum_{k\in\mathbb{Z}}e^{ik\theta_1}e^{-it\beta_k}\Gamma(1+\alpha_k)\Bigg[\sum_{m\in\mathbb{N}}
\left(
  \begin{array}{c}
    m+\alpha_k \\
    m \\
  \end{array}
\right)\frac{e^{-2itmB_0}}{(\Gamma(1+\alpha_k+m))^2}\\
&\times\Bigg(\int_0^\infty f_k(r_2)e^{\frac{B_0(r_1^2+r_2^2)}{4}}\Big(\int_0^\infty\int_0^\infty e^{-s_1-s_2}(s_1s_2)^{m+\frac{\alpha_k}{2}}\\
&\times J_{\alpha_k}(\sqrt{2B_0s_1}r_1)J_{\alpha_k}(\sqrt{2B_0s_2}r_2)ds_1ds_2\Big)r_2dr_2\Bigg)\Bigg],\quad \big(\text{variable changes:}\, s_1\to s_1^2,\, s_2\to s_2^2 \big)\\
=&\frac{2B_0}{\pi}\sum_{k\in\mathbb{Z}}e^{ik\theta_1}e^{-it\beta_k}\Bigg[\int_0^\infty f_k(r_2)e^{\frac{B_0(r_1^2+r_2^2)}{4}}e^{i\alpha_k(tB_0+\frac{\pi}{2})}\\
&\times\Bigg(\int_0^\infty\int_0^\infty\frac{s_1s_2}{e^{s_1^2+s_2^2}}\Bigg(\sum_{m\in\mathbb{N}}
\frac{(-1)^me^{-i(tB_0+\frac{\pi}{2})(2m+\alpha_k)}}{\Gamma(1+m)\Gamma(1+\alpha_k+m)}(s_1s_2)^{2m+\alpha_k}\Bigg)\\
&\times J_{\alpha_k}\big(\sqrt{2B_0}s_1r_1\big)J_{\alpha_k}\big(\sqrt{2B_0}s_2 r_2\big)ds_1ds_2\Bigg)r_2dr_2\Bigg].
\end{align*}
Since (see e.g. \cite[(4.5.2)]{AAR01})
\begin{equation*}
\sum_{m=0}^\infty\frac{(-1)^me^{-i(tB_0+\frac{\pi}{2})(2m+\alpha_k)}}{\Gamma(1+\alpha_k+m)\Gamma(1+m)}(s_1s_2)^{2m+\alpha_k}
=J_{\alpha_k}(2s_1s_2e^{-i(tB_0+\frac{\pi}{2})}),
\end{equation*}
then we have
\begin{align*}
u(t,x)=\frac{2B_0}{\pi}\sum_{k\in\mathbb{Z}}e^{ik\theta_1}e^{it(\alpha_kB_0-\beta_k)+i\alpha_k\frac{\pi}{2}}
\int_0^\infty e^{\frac{B_0(r_1^2+r_2^2)}{4}}f_k(r_2)G_{k,t}(r_1,r_2)r_2dr_2
\end{align*}
with
\begin{equation*}
\begin{split}
G_{k,t}(r_1,r_2)=\int_0^\infty\int_0^\infty&\frac{s_1s_2}{e^{s_1^2+s_2^2}}J_{\alpha_k}(2s_1s_2e^{-i(tB_0+\frac{\pi}{2})})
\\&\times J_{\alpha_k}(\sqrt{2B_0}r_1s_1)J_{\alpha_k}(\sqrt{2B_0}r_2s_2)ds_1ds_2.
\end{split}
\end{equation*}
Using formula (see \cite[formula (1), P.395]{Wat44})
\begin{align}\label{formula}
\int_0^\infty&e^{-p^2t^2}J_\nu(at)J_\nu(bt)tdt=\frac{1}{2p^2}e^{-\frac{a^2+b^2}{4p^2}}I_\nu\Big(\frac{ab}{2p^2}\Big),\nonumber\\
&\qquad \Re\nu>-1,\quad |\arg p|<\frac{\pi}{4},\quad I_\nu(r)=e^{-\frac{1}{2}\nu\pi i}J_\nu(re^{i\frac{\pi}{2}}),
\end{align}
with $t=s_2,p=1,a=\sqrt{2B_0}r_2,b=2s_1e^{-i(tB_0+\frac{\pi}{2})},\nu=\alpha_k$, we get
\begin{align*}
\int_0^\infty e^{-s_2^2}J_{\alpha_k}\big(\sqrt{2B_0}r_2s_2\big)&J_{\alpha_k}\Big(2s_1s_2e^{-i(tB_0+\frac{\pi}{2})}\Big)s_2ds_2\\
&=\frac{1}{2}e^{-\frac{B_0r_2^2+2s_1^2e^{-i(2tB_0+\pi)}}{2}}I_{\alpha_k}\big(\sqrt{2B_0}r_2s_1e^{-i(tB_0+\frac{\pi}{2})}\big),
\end{align*}
where $I_{\alpha_k}$ denotes the modified Bessel function of order $\alpha_k$.
Hence
\begin{align*}
G_{k,t}(r_1,r_2)&=\frac{1}{2}\int_0^\infty e^{-s_1^2}J_{\alpha_k}(\sqrt{2B_0}r_1s_1)e^{-\frac{B_0r_2^2+2s_1^2e^{-i(2tB_0+\pi)}}{2}}
I_{\alpha_k}\big(\sqrt{2B_0}r_2s_1e^{-i(tB_0+\frac{\pi}{2})}\big)s_1ds_1\\
&=\frac{1}{4B_0}\int_0^\infty e^{-\frac{s_1^2}{2B_0}}J_{\alpha_k}(r_1s_1)e^{-\frac{B_0r_2^2+\frac{s_1^2}{B_0}e^{-i(2tB_0+\pi)}}{2}}
I_{\alpha_k}(r_2s_1e^{-i(tB_0+\frac{\pi}{2})})s_1ds_1\\
&=\frac{1}{4B_0}e^{-i\alpha_k\frac{\pi}{2}}e^{-\frac{B_0r_2^2}{2}}\int_0^\infty e^{-\frac{s_1^2}{2B_0}(1+e^{-i(2tB_0+\pi)})}J_{\alpha_k}(r_1s_1)J_{\alpha_k}(r_2s_1e^{-itB_0})s_1ds_1\\
&=\frac{1}{4B_0}e^{-i\alpha_k\frac{\pi}{2}}e^{-\frac{B_0r_2^2}{2}}\frac{B_0}{1+e^{-i(2tB_0+\pi)}} e^{-\frac{B_0r_1^2+B_0r_2^2e^{-2itB_0}}{2(1+e^{-i(2tB_0+\pi)})}}I_{\alpha_k}(\frac{B_0r_1 r_2e^{-itB_0}}{1+e^{-i(2tB_0+\pi)}})\\
&=\frac{1}{4(1+e^{-i(2tB_0+\pi)})}e^{-i\alpha_k\frac{\pi}{2}} e^{-\frac{B_0(r_1^2+r_2^2)}{2(1+e^{-i(2tB_0+\pi)})}}I_{\alpha_k}\Big(\frac{B_0r_1 r_2e^{-itB_0}}{1+e^{-i(2tB_0+\pi)}}\Big)
\end{align*}
where we have used the fact $I_\nu(r)=e^{-i\frac{\pi}{2}\nu}J_\nu(re^{i\frac{\pi}{2}})$ in the third equality and \eqref{formula} with $t=s_1,p^2=\frac{1+e^{-i(2tB_0+\pi)}}{2B_0},a=r_1,b=r_2e^{-itB_0},\nu=\alpha_k$ in the last equality.
Finally, we use the simple fact (for $tB_0\neq k\pi, k\in\mathbb{Z}$)
\begin{equation*}
\frac{1}{1+e^{-i(2tB_0+\pi)}}=\frac{e^{itB_0}}{2i\sin tB_0}
\end{equation*}
to obtain
\begin{align*}
u(t,x)=&\frac{2B_0}{\pi}\frac{e^{itB_0}}{8i\sin tB_0}\sum_{k\in\mathbb{Z}}e^{ik\theta_1}e^{it(\alpha_kB_0-\beta_k)}\\
&\times\Bigg[\int_0^\infty e^{\frac{B_0(r_1^2+r_2^2)}{4}}f_k(r_2)e^{-\frac{B_0(r_2^2+r_1^2)}{4}\cdot\frac{e^{itB_0}}{i\sin tB_0}}I_{\alpha_k}\Bigg(\frac{B_0r_1r_2}{2i\sin (tB_0)}\Bigg)r_2dr_2\Bigg]\\
=&\frac{B_0e^{-itB_0\alpha}}{4\pi i\sin (tB_0)}\sum_{k\in\mathbb{Z}}e^{ik\theta_1}e^{-itB_0k}\Bigg(\int_0^\infty
f_k(r_2)e^{-\frac{B_0(r_1^2+r_2^2)\cos (tB_0)}{4i\sin (tB_0)}}I_{\alpha_k}\Bigg(\frac{B_0r_1r_2}{2i\sin (tB_0)}\Bigg)r_2dr_2\Bigg),
\end{align*}
where we use $\beta_k=(k+\alpha)B_0+(1+\alpha_k)B_0$ in the last line.
Recalling $f_k$ in \eqref{f-k}, we finally have
\begin{equation}\label{S:sum}
\begin{split}
u(t,x)
=&\frac{B_0e^{-itB_0\alpha}}{8\pi^2 i\sin (tB_0)}\int_0^\infty \int_0^{2\pi}e^{-\frac{B_0(r_1^2+r_2^2)}{4i\tan (tB_0)}} \\&\times\sum_{k\in\mathbb{Z}}
\Bigg( e^{ik(\theta_1-\theta_2-tB_0)}
I_{\alpha_k}\bigg(\frac{B_0r_1r_2}{2i\sin (tB_0)}\bigg)\Bigg) f(r_2,\theta_2)r_2dr_2 d\theta_2.
\end{split}
\end{equation}
Now we consider the summation in $k$.
Let $z=\frac{B_0r_1r_2}{2i\sin (tB_0)}$ and $\theta=\theta_1-\theta_2-tB_0$ and recall the following integral representation \cite{Wat44} of the modified Bessel function $I_\nu$
\begin{equation}\label{mBessel:integral}
I_\nu(z)=\frac{1}{\pi}\int_0^\pi e^{z\cos s}\cos(\nu s)d s-\frac{\sin(\pi\nu)}{\pi}\int_0^\infty e^{-z\cosh s}e^{-s\nu}ds.
\end{equation}
Recall $\alpha_k=|\alpha+k|$ and $\alpha\in(0,1)$, we need to consider
\begin{equation}\label{equ:term1}
 \frac1\pi \sum_{k\in\Z} e^{ik\theta} \int_0^\pi e^{z\cos s} \cos(\alpha_k s) ds,
\end{equation}
  and
    \begin{equation}\label{equ:term2}
 \frac1\pi \sum_{k\in\Z} e^{ik\theta } \sin(\pi\alpha_k)\int_0^\infty e^{-z\cosh s} e^{-s\alpha_k} ds.
  \end{equation}
Similarly as in \cite[Proposition 3.1]{FZZ22}, we use the Poisson summation formula
\begin{equation*}
\sum_{j\in\Z} \delta(x-2\pi j)=\sum_{k\in\Z} \frac1{2\pi} e^{i kx},
\end{equation*}
to  obtain
 \begin{equation*}
 \begin{split}
 \frac1{\pi} \sum_{k\in\Z} e^{ik\theta} \cos(\alpha_k s) =
 \sum_{j\in\Z}\big[e^{is\alpha}\delta(\theta+s+2j\pi)
 +e^{-is\alpha}\delta(\theta-s +2j\pi)\big].
 \end{split}
 \end{equation*}
Therefore we have
\begin{align}
&  \eqref{equ:term1}\nonumber\\
&=\sum_{j\in\mathbb{Z}}\int_0^\pi e^{z\cos s}\Bigg(e^{is\alpha}\delta(\theta+s+2\pi j)+e^{-is\alpha}\delta(\theta-s+2\pi j)\Bigg)d s\nonumber\\
&=e^{z\cos(\theta)}e^{-i\alpha\theta}\sum_{j\in\mathbb{Z}} \chi_{[-\pi,\pi]}(\theta+2\pi j)e^{-i2\pi j\alpha} \label{term1+2}.
\end{align}
Recall $\theta=\theta_1-\theta_2-tB_0\in \mathbb{R}$ and observe that
$$\theta+2\pi j_0\in (-\pi, \pi)\implies \theta+2\pi (j_0\pm m) \notin (-\pi, \pi), |m|\ge 1, $$
the summation in $j$ is only one term (except for $\theta+2j_0\pi=\pm\pi$). It's easily to verify that the identity \eqref{term1+2} are the same at $\theta+2j_0\pi=\pm\pi.$ For all $\theta\in\mathbb{R},$ there exists $j_0\in\mathbb{Z}$ such that $\theta+2j_0\pi\in[-\pi,\pi].$ Therefore, we get
\begin{align*}
 \eqref{equ:term1}=e^{z\cos(\theta)}e^{-i\alpha\theta}e^{-i2\pi j_0\alpha}\left\{
                     \begin{array}{ll}
                       1, & \hbox{$|\theta+2j_0\pi|<\pi$ ;} \\
                       e^{-i2\pi \alpha}+1 , & \hbox{$\theta+2j_0\pi=-\pi;$}\\
                       e^{i2\pi\alpha}+1, & \hbox{$\theta+2j_0\pi=\pi$.}
                     \end{array}
                   \right.
\end{align*}
Recall again that $\alpha_k=|\alpha+k|,k\in\mathbb{Z}$ and $\alpha\in(0,1)$, we have
\begin{align*}
\alpha_k=|\alpha+k|=
\begin{cases}
k+\alpha,&k\geq0;\\
-\alpha-k,&k\leq-1,
\end{cases}
\end{align*}
and hence
\begin{align*}
\sin(\pi\alpha_k)=\sin(\pi|\alpha+k|)=
\begin{cases}
\cos k\pi\sin(\pi\alpha)=e^{ik\pi}\sin(\pi\alpha),&k\geq0;\\
-\cos k\pi\sin(\pi\alpha)=-e^{ik\pi}\sin(\pi\alpha),&k\leq-1.
\end{cases}
\end{align*}
We can compute the summation as follows

\begin{align*}
&\sum_{k\in\mathbb{Z}}\sin(\pi|\alpha+k|)e^{-s|\alpha+k|}e^{ik\theta}\\
&\quad=\sin(\pi\alpha)\sum_{k\geq0}e^{ik\pi}e^{-s(k+\alpha)}e^{ik\theta}-\sin(\pi\alpha)\sum_{k\leq-1}e^{ik\pi}e^{s(k+\alpha)}e^{ik\theta}\\
&\quad=\sin(\pi\alpha)\Big(e^{-\alpha s}\sum_{k\geq0}e^{ik(is+\pi+\theta)}-e^{\alpha s}\sum_{k\geq 1} e^{ik(is-\pi-\theta)}\Big)\\
&\quad=\sin(\pi\alpha)\Big(\frac{e^{-\alpha s}}{1-e^{i(is+\pi+\theta)}}-\frac{e^{\alpha s}e^{i(is-\pi-\theta)}}{1-e^{i(is-\pi-\theta)}}\Big)\\
&\quad=\sin(\pi\alpha)\Big(\frac{e^{-\alpha s}}{1+e^{-s+i\theta}}+\frac{e^{\alpha s}}{1+e^{s+i\theta)}}\Big).
\end{align*}
Therefore, we see
\begin{align*}
 & \eqref{equ:term2}\\
 = &\frac{\sin(\alpha\pi)}{\pi}\int_0^\infty e^{-z\cosh s}
\Big(\frac{e^{-\alpha s}}{1+e^{-s+i\theta}}+\frac{e^{\alpha s}}{1+e^{s+i\theta)}}\Big)\,ds\\
 = &\frac{\sin(\alpha\pi)}{\pi}\int_{-\infty}^\infty e^{-z\cosh s}
\frac{e^{-\alpha s}}{1+e^{-s+i\theta}}\,ds.
\end{align*}

So we put the two terms together and recall $\theta=\theta_1-\theta_2-tB_0$
to obtain
\begin{equation*}
\begin{split}
K_S(x,y)&=\frac{B_0e^{-itB\alpha}}{8\pi^2 i\sin (tB_0)}e^{\frac{iB_0(r_1^2+r_2^2)}{4\tan (tB_0)}}\\
\times&\Big[e^{\frac{B_0r_1r_2}{2i\sin (tB_0)}\cos(\theta_1-\theta_2-tB_0)}
e^{-i\alpha(\theta_1-\theta_2-tB_0+2j_0\pi)}\chi(\theta, j_0)\\
&-\frac{\sin(\pi\alpha)}{\pi}\int_{\R} e^{-\frac{B_0r_1r_2}{2i\sin (tB_0)}\cosh s}\frac{e^{-\alpha s}}{1+e^{-s+i(\theta_1-\theta_2-tB_0)}}\,ds\Big],
\end{split}
\end{equation*}
which gives \eqref{S:express}.

\section{The Construction of Schr\"odinger propagator based on the Schulman-Sunada formula}\label{sec:const2}

In this section, we will use the Schulman-Sunada formula to reconstruct the Schr\"odinger propagator. We refer the reads to \cite{stov,stov1}.

Let $M=\R^2\setminus\{\vec{0}\}=(0,+\infty)\times \mathbb{S}^1$ where $\mathbb{S}^1$ is the unit circle. The universal covering space of $M$ is $\tilde{M}=(0,+\infty)\times \R$, then $M=\tilde{M}/\Gamma$ where the structure group $\Gamma=2\pi\Z$ acts in the second factor of the Cartesian product. Then Schulman's ansatz (see \cite{stov,stov1}) enables us to compute the Schr\"odinger propagator $e^{-it H_{\alpha, B_0}}$ on $M$ by using the Schr\"odinger propagator $e^{-it \tilde{H}_{\alpha, B_0}}$ (see the operator $\tilde{H}_{\alpha, B_0}$ in \eqref{t-H} below) on $\tilde{M}$. More precisely, see \cite[(1)]{stov1}, we have
\begin{equation}\label{SS}
e^{-it H_{\alpha, B_0}}(r_1,\theta_1; r_2, \theta_2)=\sum_{j\in\Z} e^{-it \tilde{H}_{\alpha, B_0}} (r_1,\theta_1+2j\pi; r_2, \theta_2).
\end{equation}
This is similar to the construction of wave propagator on $\mathbb{T}^n$, see \cite[(3.5.12)]{sogge}. In the following subsections, we will construct the Schr\"odinger propagator $e^{-it \tilde{H}_{\alpha, B_0}}$. \vspace{0.1cm}

\subsection{The eigenfunctions and eigenvalues}Before we construct the propagator $e^{-it \tilde{H}_{\alpha, B_0}} $, let us give some remarks about the difference and advantages between $\tilde{H}_{\alpha, B_0}$ and $H_{\alpha, B_0}$.
First, we recall the proof of Proposition \ref{prop:spect}. From \eqref{H-A},  in the polar coordinates $(r,\theta)\in M$,  then
\begin{equation*}
H_{\alpha, B_0}=-\partial_r^2-\frac{1}{r}\partial_r+\frac{1}{r^2}\Big(-i\partial_\theta+\alpha+\frac{B_0r^2}{2}\Big)^2,
\end{equation*}
which acts on $L^2(M, rdr\, d\theta)$. For $V_{k,m}(x)$ in \eqref{eigen-f} and $\lambda_{k,m}$ in \eqref{eigen-v},
we have show that
\begin{equation*}
H_{\alpha, B_0} V_{k,m}(x)=\lambda_{k,m} V_{k,m}(x).
\end{equation*}
We remark here that we choose $e^{ik\theta}$ as an eigenfunction of  the operator $\Big(-i\partial_\theta+\alpha+\frac{B_0r^2}{2}\Big)^2$ on $L^2_\theta([0,2\pi))$ which satisfies that
\begin{equation*}
\begin{cases}
\Big(-i\partial_\theta+\alpha+\frac{B_0r^2}{2}\Big)^2 \varphi(\theta)=\Big(k+\alpha+\frac{B_0r^2}{2}\Big)^2 \varphi(\theta)\\
\varphi(0)=\varphi(2\pi).
\end{cases}
\end{equation*}
Instead,  we consider the operator
\begin{equation}\label{t-H}
\tilde{H}_{\alpha, B_0}=-\partial_r^2-\frac{1}{r}\partial_r+\frac{1}{r^2}\Big(-i\partial_\theta+\alpha+\frac{B_0r^2}{2}\Big)^2,
\end{equation}
which acts on $L^2(\tilde{M}, rdr\, d\theta)$. We emphasize that the variable $\theta\in\R$ while not compact manifold $\mathbb{S}^1$. Then
we choose $e^{i(\tilde{k}-\alpha)\theta}$ as an eigenfunction of  the operator $\Big(-i\partial_\theta+\alpha+\frac{B_0r^2}{2}\Big)^2$ on $L^2_\theta(\R)$ which satisfies that
\begin{equation}\label{eq:ef}
\Big(-i\partial_\theta+\alpha+\frac{B_0r^2}{2}\Big)^2 \varphi(\theta)=\Big(\tilde{k}+\frac{B_0r^2}{2}\Big)^2 \varphi(\theta).
\end{equation}
It worths to point out that $\tilde{k}\in\R$ is a real number while $k\in\Z$. More important,  we inform move the $\alpha$ in right hand side of \eqref{eq:ef} to the $e^{i(\tilde{k}-\alpha)\theta}$ which will simplify the eigenfunctions. \vspace{0.2cm}

Now we modify the argument of  Proposition \ref{prop:spect}to solve
\begin{equation}\label{eigen-p'}
\tilde{H}_{\alpha, B_0} g(x)=\lambda g(x)
\end{equation}
to
 obtain the eigenfunctions of $\tilde{H}_{\alpha, B_0}$. Define the Fourier transform $F_{\theta\to\tilde{k}}$ with respect to the variable $\theta$
\begin{equation}\label{Fourier'}
F_{\theta\to\tilde{k}} f(r,\tilde{k})=\frac{1}{2\pi}\int_{\R}e^{i\tilde{k}\theta}f(r,\theta)\,d\theta:= \hat{f}(r,\tilde{k}),\quad \tilde{k}\in\R.
\end{equation}
By taking the Fourier transform of \eqref{eigen-p'}, in contrast to \eqref{eq:gk}
we obtain
\begin{equation}\label{eq:gk'}
\hat{g}''(r,\tilde{k})+\frac{1}{r}\hat{g}'(r,\tilde{k})-\frac{1}{r^2}\Big(\tilde{k}+\frac{B_0 r^2}{2}\Big)^2 \hat{g}(r,\tilde{k})=-\lambda \hat{g}(r,\tilde{k}).
\end{equation}
Let
\begin{equation*}
\psi_{\tilde{k}}(s)=\Big(\frac{2s}{B_0}\Big)^{-\frac{|\tilde{k}|}{2}}e^{\frac{s}{2}} \hat{g}(\Big(\sqrt{\frac{2s}{B_0}}, \tilde{k}\Big),
\end{equation*}
 then $\psi_{\tilde{k}}(s)$ satisfies
\begin{equation}\label{eq:psik}
s\psi''_{\tilde{k}}(s)+(1+|\tilde{k}|-s)\psi'_{\tilde{k}}(s)-\frac{1}{2}\Big(1+|\tilde{k}|+\tilde{k}-\frac{\lambda}{B_0}\Big)\psi_{\tilde{k}}(s)=0.
\end{equation}
which is same to \eqref{eq:phik} by replacing $|k+\alpha|$ by $\tilde{k}$.
By using Lemma \ref{lem:KCH} again, we have two linearly independent solutions of \eqref{eq:psik}, hence two linearly independent solutions of \eqref{eq:gk'} are given by
\begin{align*}
\hat{g}^1(\lambda;r,\tilde{k})&=r^{|\tilde{k}|}M\Big(\tilde{\beta}(\tilde{k},\lambda), \tilde{\gamma}(\tilde{k}),\frac{B_0r^2}{2}\Big)e^{-\frac{B_0r^2}{4}}\\
\hat{g}^2(\lambda;r,\tilde{k})&=r^{|\tilde{k}|}U\Big(\tilde{\beta}(\tilde{k},\lambda),\tilde{\gamma}(\tilde{k}),\frac{B_0r^2}{2}\Big)e^{-\frac{B_0r^2}{4}}
\end{align*}
with
\begin{align*}
\tilde{\beta}(\tilde{k},\lambda)&=\frac{1}{2}\Big(1+\tilde{k}+|\tilde{k}|-\frac{\lambda}{B_0}\Big),\\
\tilde{\gamma}(\tilde{k})&=1+|\tilde{k}|.
\end{align*}
Therefore, the general solution of \eqref{eq:psik} is given by
\begin{equation*}
\hat{g}(r,\tilde{k})=A_{\tilde{k}}\hat{g}^1(\lambda;r,\tilde{k})+B_{\tilde{k}}\hat{g}^2(\lambda;r,\tilde{k}).
\end{equation*}
where $A_{\tilde{k}}, B_{\tilde{k}}$ are two constants which depend on $\tilde{k}\in\R$.
Let
$$m:=-\tilde{\beta}(\tilde{k},\lambda)=-\frac{1}{2}\Big(1+\tilde{k}+|\tilde{k}|-\frac{\lambda}{B_0}\Big).$$
Similar as above,  we use this asymptotic Lemma \ref{lem:asy} to conclude that $m\in\mathbb{N}$ again, we omit the details.
\vspace{0.2cm}

Therefore, we must have
$$\mathbb{N} \ni m=-\frac{1}{2}\Big(1+\tilde{k}+|\tilde{k}|-\frac{\lambda}{B_0}\Big),$$
and
$$\hat{g}(r,\tilde{k})=r^{|\tilde{k}|}e^{-\frac{B_0r^2}{4}}\, P_{\tilde{k}-\alpha,m}\Big(\frac{B_0r^2}{2}\Big).$$
Therefore, we obtain a complete set of generalized eigenfunctions of $\tilde{H}_{\alpha, B_0}$
\begin{equation*}
\Big\{U_{m}(x, \tilde{k}): m\in \mathbb{N}, \tilde{k}\in\R\Big\}
\end{equation*}
where
\begin{equation}\label{eigen-f'}
U_{m}(x, \tilde{k})=|x|^{|\tilde{k}|}e^{-\frac{B_0 |x|^2}{4}}\, P_{\tilde{k}-\alpha,m}\Bigg(\frac{B_0|x|^2}{2}\Bigg)e^{i(\tilde{k}-\alpha)\theta}
\end{equation}
which belongs to $L^2(\tilde{M}, rdr\, d\theta)$.
Thus from $-m=\frac{1}{2}\Big(1+\tilde{k}+|\tilde{k}|-\frac{\lambda}{B_0}\Big)$, we solve \eqref{eigen-p'} to obtain the eigenvalues $\lambda$ of $\tilde{H}_{\alpha,B_0}$
\begin{equation*}
\lambda_{\tilde{k},m}=(2m+1+|\tilde{k}|+\tilde{k})B_0,\quad \tilde{k}\in\mathbb{R},\, m\in\mathbb{N}.
\end{equation*}
We obtain analogue of \eqref{Po-L} by using \eqref {La-po} and \eqref{P-L}
\begin{equation*}
\begin{split}
&\sum_{m=0}^\infty e^{-cm}\frac{m !}{\Gamma(m+|\tilde{k}|+1)}
\Bigg(\begin{array}{c}
    m+|\tilde{k}| \\
    m \\
  \end{array}
\Bigg)^2 P_{\tilde{k}-\alpha,m} (a) P_{\tilde{k}-\alpha,m} (b)\\
&=\frac{e^{\frac{|\tilde{k}| c}2}}{(ab)^{\frac{|\tilde{k}|}2}(1-e^{-c})} \exp\left(-\frac{(a+b)e^{-c}}{1-e^{-c}}\right) I_{|\tilde{k}|}\left(\frac{2\sqrt{ab}e^{-\frac{c}2}}{1-e^{-c}}\right).
\end{split}
\end{equation*}

\subsection{Construction of $e^{-it \tilde{H}_{\alpha, B_0}} $} Now we construct the propagator $e^{-it \tilde{H}_{\alpha, B_0}} $ by using the above eigenfunctions.
Let $\tilde{U}_{m}(x, \tilde{k})$ be the $L^2$-normalization of $U_{m}(x, \tilde{k})$ in \eqref{eigen-f'}.
We wirte initial data $f(x)\in L^2$ as
\begin{equation*}
f(x)=\sum_{m\in\mathbb{N}}\int_{\R} c_{m}(\tilde{k}) \tilde{U}_{m}(x, \tilde{k}) \, d\tilde{k}
\end{equation*}
where
\begin{equation*}
c_{m}(\tilde{k})=\int_{\mathbb{R}^2}f(x)\overline{\tilde{U}_{m}(x, \tilde{k})}\, dx.
\end{equation*}
Using the Fourier transform \eqref{Fourier'}, we solve
\begin{equation*}
\begin{cases}
\big(i\partial_t-\tilde{H}_{\alpha,B_0}\big)u(t,x)=0,\quad (t, x)\in \R\times \tilde{M}\\
u(0,x)=f(x).
\end{cases}
\end{equation*}
to obtain
\begin{equation*}
u(t,x)=\sum_{ m\in\mathbb{N}} \int_{\R} e^{-it\lambda_{\tilde{k},m}}\left(\int_{\mathbb{R}^2}f(y)\overline{\tilde{U}_{m}(y, \tilde{k})}\, dy\right)\tilde{U}_{m}(x, \tilde{k}) \, d\tilde{k}.
\end{equation*}
By repeating
the the proof of Proposition \ref{prop:S}, we similarly show that
\begin{equation*}
\begin{split}
u(t,x)
=&\frac{B_0}{8\pi^2 i\sin (tB_0)}\int_0^\infty \int_{\R}e^{-\frac{B_0(r_1^2+r_2^2)}{4i\tan (tB_0)}} \\&\times \int_{\R}
\Bigg( e^{i(\tilde{k}-\alpha)(\theta_1-\theta_2-tB_0)}
I_{|\tilde{k}|}\bigg(\frac{B_0r_1r_2}{2i\sin (tB_0)}\bigg)\Bigg)\, d\tilde{k} f(r_2,\theta_2)r_2dr_2 d\theta_2.
\end{split}
\end{equation*}
In contrast to \eqref{S:sum}, the difference here is that we  replace the summation in $k\in\Z$ by integration on $\tilde{k}\in\R$.
Hence the kernel of $e^{-it \tilde{H}_{\alpha, B_0}} $ is
\begin{equation*}
\begin{split}
\tilde{K}_S(x,y)
=&\frac{B_0}{4\pi i\sin (tB_0)}e^{-\frac{B_0(r_1^2+r_2^2)}{4i\tan (tB_0)}} \\&\times \int_{\R}
\Bigg( e^{i(\tilde{k}-\alpha)(\theta_1-\theta_2-tB_0)}
I_{|\tilde{k}|}\bigg(\frac{B_0r_1r_2}{2i\sin (tB_0)}\bigg)\Bigg)\, d\tilde{k},
\end{split}
\end{equation*}
where $x=(r_1,\theta_1)\in \tilde{M}$ and $y=(r_2,\theta_2)\in \tilde{M}$.

Now, instead of summing in $k$ as before,
 we consider the integration in $\tilde{k}$. Again by letting $z=\frac{B_0r_1r_2}{2i\sin (tB_0)}$ and $\theta=\theta_1-\theta_2-tB_0$ and
 using \eqref{mBessel:integral}, we compute that
\begin{equation*}
\begin{split}
& \frac1\pi \int_{\R} e^{i\tilde{k}\theta} \int_0^\pi e^{z\cos s} \cos(|\tilde{k}| s) dsd\tilde{k}\\
 &=\frac1{2\pi} e^{z\cos \theta} \Big(\chi_{[0,\pi]}(\theta)+\chi_{[0,\pi]}(-\theta)\Big)= e^{z\cos \theta} \chi_{[-\pi,\pi]}(\theta),
 \end{split}
\end{equation*}
  and
  \begin{equation*}
    \begin{split}
& \frac1\pi \int_{\R} e^{i\tilde{k}\theta} \sin(\pi |\tilde{k}|)\int_0^\infty e^{-z\cosh s} e^{-s|\tilde{k}|} ds \,d\tilde{k}\\
 &=  \frac1\pi\int_0^\infty e^{-z\cosh s} \Big( \int_{0}^\infty e^{i\tilde{k}\theta}\frac{e^{i\pi \tilde{k}}-e^{-i\pi \tilde{k}}}{2i}  e^{-s\tilde{k}} d\tilde{k} \\
& \qquad+
   \int_{-\infty}^0 e^{i\tilde{k}\theta} \frac{e^{-i\pi \tilde{k}}-e^{i\pi \tilde{k}}}{2i}  e^{s\tilde{k}} d\tilde{k}\Big) \,ds\\
  &=  \frac1{2\pi}\int_{-\infty}^\infty e^{-z\cosh s} \big(\frac{1}{\theta+\pi+is}-\frac{1}{\theta-\pi+is}\big) ds
  \end{split}
  \end{equation*}
  Therefore, we obtain
  \begin{equation*}
\begin{split}
&\tilde{K}_S(x,y)
=\frac{B_0}{8\pi^2 i\sin (tB_0)}e^{-\frac{B_0(r_1^2+r_2^2)}{4i\tan (tB_0)}} e^{-i\alpha\theta}\\&\times
\Big( e^{z\cos \theta} \chi_{[-\pi,\pi]}(\theta)-\frac1{2\pi}\int_{-\infty}^\infty e^{-z\cosh s} \big(\frac{1}{\theta+\pi+is}-\frac{1}{\theta-\pi+is}\big) ds
\Big).
\end{split}
\end{equation*}
Finally, by using \eqref{SS}, we have that
\begin{equation*}
\begin{split}
&e^{-it H_{\alpha, B_0}}(r_1,\theta_1; r_2, \theta_2)\\
&=\frac{B_0}{8\pi^2 i\sin (tB_0)}e^{-\frac{B_0(r_1^2+r_2^2)}{4i\tan (tB_0)}}
\sum_{j\in\Z} e^{-i\alpha(\theta+2j\pi)}
\Big(  e^{z\cos (\theta+2j\pi)} \chi_{[-\pi,\pi]}(\theta+2j\pi)\\&\qquad-\frac1{2\pi}\int_{-\infty}^\infty e^{-z\cosh s} \big(\frac{1}{(\theta+2j\pi)+\pi+is}-\frac{1}{(\theta+2j\pi)-\pi+is}\big) ds
\Big).
\end{split}
\end{equation*}
Due to the period property of $\cos$ function and for all $\theta\in\mathbb{R},$ there exists $j_0\in\mathbb{Z}$ such that $\theta+2j_0\pi\in [-\pi,\pi),$ the first term in the big bracket becomes
\begin{align*}
&e^{z\cos \theta} e^{-i\alpha\theta}  \sum_{j\in\Z} e^{-i\alpha 2j\pi}
\chi_{[-\pi,\pi]}(\theta+2j\pi)\\
&=e^{z\cos \theta} e^{-i\alpha\theta}e^{-i2\pi j_0\alpha}\times\left\{
    \begin{array}{ll}
      1, & \hbox{$|\theta+2j_0\pi|<\pi$;} \\
      e^{-i2\pi\alpha}+1, & \hbox{$\theta+2j_0\pi=-\pi$;}\\
      e^{i2\pi\alpha}+1, & \hbox{$\theta+2j_0\pi=\pi$.}
    \end{array}
  \right.
\end{align*}
which is the same to \eqref{term1+2}. Hence it is the same to the first term of \eqref{S:express}.

For the second term in the big bracket, we use the formula
\begin{equation*}
\sum_{j\in\Z} \frac{e^{-2\pi i\alpha j}}{\sigma+2\pi j}=\frac{i e^{i\alpha\sigma}}{e^{i\sigma}-1},\quad \alpha\in(0,1),\quad \sigma\in\C\setminus 2\pi\Z,
\end{equation*}
to obtain
\begin{equation*}
\begin{split}
\sum_{j\in\Z} e^{-2\pi i\alpha j}&\big(\frac{1}{(\theta+2j\pi)+\pi-is}-\frac{1}{(\theta+2j\pi)-\pi-is}\big)\\&
=2\sin(\pi \alpha)\frac{e^{\alpha(s+i\theta)}}{1+e^{s+i\theta}}.
\end{split}
\end{equation*}
Now we consider the second term
\begin{equation*}
\begin{split}
&-e^{-i\theta\alpha}\frac{2\sin(\pi\alpha)}{2\pi}e^{i\theta\alpha} \int_{-\infty}^\infty e^{-z\cosh s} \frac{e^{\alpha s}}{1+e^{s+i\theta}}\,ds\\
=&-\frac{\sin(\pi\alpha)}{\pi}\int_{-\infty}^\infty e^{-z\cosh s} \frac{e^{-\alpha s}}{1+e^{-s+i\theta}}\,ds
\end{split}
\end{equation*}
Recall $\theta=\theta_1-\theta_2-tB_0$ and $z=\frac{B_0r_1r_2}{2i\sin (tB_0)}$, we
obtain
\begin{equation*}
\begin{split}
K_S(x,y)&=\frac{B_0e^{-itB\alpha}}{8\pi^2 i\sin (tB_0)}e^{\frac{iB_0(r_1^2+r_2^2)}{4\tan (tB_0)}}\\
\times&\Big[e^{\frac{B_0r_1r_2}{2i\sin (tB_0)}\cos(\theta_1-\theta_2-tB_0)}
e^{-i\alpha(\theta_1-\theta_2-tB_0+2j_0\pi)}\chi(\theta, j_0)\\
&-\frac{\sin(\pi\alpha)}{\pi}\int_{\R} e^{-\frac{B_0r_1r_2}{2i\sin (tB_0)}\cosh s}\frac{e^{-\alpha s}}{1+e^{-s+i(\theta_1-\theta_2-tB_0)}}\,ds\Big],
\end{split}
\end{equation*}
which is exact same to \eqref{S:express}.
\end{proof}

\section{proof of Theorem \ref{thm:S}}\label{sec:proof}
In this section, we prove the main Theorem \ref{thm:S} by using \eqref{S:express}. We first prove the dispersive estimate \eqref{dis-S}. To this end, it is enough to prove
\begin{align*}
\Big|\int_{\R} e^{-\frac{B_0r_1r_2}{2i\sin (tB_0)}\cosh s}\frac{e^{-\alpha s}}{1+e^{-s+i(\theta_1-\theta_2-tB_0)}}\,ds\Big|\leq C,
\end{align*}
where $C$ is a constant independent of $t$, $r_1, r_2$ and $\theta_1, \theta_2$.

We notice that
\begin{align*}
&\int_{\R} e^{-\frac{B_0r_1r_2}{2i\sin (tB_0)}\cosh s}\frac{e^{-\alpha s}}{1+e^{-s+i(\theta_1-\theta_2-tB_0)}}\,ds\\
&=\int_{0}^\infty e^{-\frac{B_0r_1r_2}{2i\sin (tB_0)}\cosh s}\Big(\frac{e^{-\alpha s}}{1+e^{-s+i(\theta_1-\theta_2-tB_0)}}+\frac{e^{\alpha s}}{1+e^{s+i(\theta_1-\theta_2-tB_0)}}\Big)\,ds
\end{align*}
then we just need to verify that, for $\theta=\theta_1-\theta_2-tB_0,$
\begin{align*}
\int_{0}^\infty\Big|\frac{e^{-\alpha s}}{1+e^{-s+i\theta}}+\frac{e^{\alpha s}}{1+e^{s+i\theta}}\Big| \,ds\lesssim 1,
\end{align*}
where the implicit constant is independent of $\theta$.
In fact,
\begin{align*}
&\frac{e^{-\alpha s}}{1+e^{-s+i\theta}}+\frac{e^{\alpha s}}{1+e^{s+i\theta}}\\
&=\frac{\cosh(\alpha s)e^{-i\theta}+\cosh((1-\alpha)s)}{\cos\theta+\cosh s}\\
&=\frac{\cosh(\alpha s)\cos\theta+\cosh((1-\alpha)s)-i\sin\theta\cosh(\alpha s)}{2(\cos^2(\frac{\theta}{2})+\sinh^2(\frac s 2))}\\
&=\frac{2\cos^2(\frac{\theta}2)\cosh(\alpha s)+(\cosh((1-\alpha)s)-\cosh(\alpha s))-2i\sin(\frac\theta 2)\cos(\frac\theta 2)\cosh(\alpha s)}{2(\cos^2(\frac{\theta}{2})+\sinh^2(\frac s 2))}.
\end{align*}
Since that $\cosh x-1\sim\frac{x^2}2,\sinh x\sim x$,  as $x\to 0;$ $\cosh x\sim e^x,\sinh x\sim e^{x}$, as $x\to \infty,$
 we have
\begin{align*}
\int_{0}^\infty\Big|\frac{\cos^2(\frac{\theta}2)\cosh(\alpha s)}{\cos^2(\frac{\theta}{2})+\sinh^2(\frac s 2)}\Big| \,ds\lesssim\int_0^1\frac{2|\cos(\frac\theta 2)|}{s^2+(2|\cos(\frac\theta 2)|)^2}ds+\int_1^\infty\ e^{(\alpha-1)s}ds\lesssim 1.
\end{align*}
Similarly,  we obtain
\begin{align*}
\int_{0}^\infty\Big|\frac{\sin(\frac\theta 2)\cos(\frac\theta 2)\cosh(\alpha s)}{\cos^2(\frac{\theta}{2})+\sinh^2(\frac s 2)}\Big| \,ds\lesssim 1.
\end{align*}
Finally, we verify that
\begin{align*}
&\int_{0}^\infty\Big|\frac{\cosh((1-\alpha)s)-\cosh(\alpha s)}{\cos^2(\frac{\theta}{2})+\sinh^2(\frac s 2)}\Big| \,ds\\
&\lesssim\int_0^1\frac{|\frac{(1-\alpha)^2}2-\frac{\alpha^2}2|s^2}{s^2}ds+\int_1^\infty \big( e^{-\alpha s}+e^{(\alpha-1)s} \big)ds\lesssim 1.
\end{align*}
Therefore, we obtain the bound \eqref{dis-S} as desired.

Next we prove \eqref{str-S}. For $T\in (0,\frac{\pi}{2B_0})$ and $t\in (0,T)$, we have
\begin{equation*}
\frac{2}{\pi}\leq\frac{\sin (tB_0)}{tB_0}\leq 1,
\end{equation*}
then the dispersive estimate \eqref{dis-S} gives
\begin{equation}\label{dis-S-0}
\|e^{-itH_{\alpha,B_0}}\|_{L^1(\mathbb{R}^2)\rightarrow L^\infty(\mathbb{R}^2)}\lesssim\frac{1}{t},\quad \forall t\in(0,T],
\end{equation}
when $T\in (0,\frac{\pi}{2 B_0}]$. On the other hand, by the spectral theory, we have the energy estimate
\begin{equation}\label{energy}
\|e^{-itH_{\alpha,B_0}}\|_{L^2(\mathbb{R}^2)\rightarrow L^2(\mathbb{R}^2)}\lesssim 1.
\end{equation}

To prove the Strichartz estimates \eqref{str-S}, we recall
the abstract mechanism of Keel-Tao \cite{KT}

\begin{theorem}(Keel-Tao \cite{KT})
Let $(X,d\mu)$ be a measure space and $H$ a Hilbert space. Suppose
that for each time $t\in\mathbb{R}$, $U(t): H \rightarrow L^2(X)$
which satisfies the energy estimate
\begin{equation*}
\|U(t)\|_{H\rightarrow L^2}\leq C,\quad t\in \mathbb{R}
\end{equation*}
and that for some $\sigma>0$ either
\begin{equation*}
\|U(t)U(s)^*f\|_{L^\infty(X)}\leq C|t-s|^{-\sigma}\|f\|_{L^1(X)},\quad t\neq s.
\end{equation*}
Then the
estimates  hold
\begin{equation*}
\|U(t)f\|_{L^q_tL^r(X)}\lesssim \|f\|_{L^2(X)}.
\end{equation*}
where $$(q,r)\in\Lambda:=\Big\{(q,r)\in[2,+\infty]\times[2,+\infty): \frac2q=n(\frac12-\frac1r)\Big\}.$$
\end{theorem}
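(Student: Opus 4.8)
The plan is to prove this abstract estimate by the classical $TT^*$ method, reducing it to a bilinear analysis of the operator $U(t)U(s)^*$ to which the two hypotheses are perfectly adapted. First I would record the duality reformulation: the target bound $\|U(t)f\|_{L^q_tL^r(X)}\lesssim\|f\|_{L^2}$ is equivalent to the dual estimate $\|\int_{\R}U(s)^*F(s)\,ds\|_{L^2}\lesssim\|F\|_{L^{q'}_tL^{r'}(X)}$, and squaring the left-hand side turns this into the boundedness of the bilinear form
\begin{equation*}
T(F,G)=\int_{\R}\int_{\R}\langle U(t)U(s)^*F(s),\,G(t)\rangle\,ds\,dt
\end{equation*}
from $L^{q'}_tL^{r'}\times L^{q'}_tL^{r'}$ into $\C$ (it suffices to control the diagonal $T(F,F)$). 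The whole problem is thereby transferred onto the kernel operator $U(t)U(s)^*$.

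The second step is to interpolate the two hypotheses. The energy bound gives $\|U(t)U(s)^*\|_{L^2\to L^2}\le C^2$, while the decay bound gives $\|U(t)U(s)^*\|_{L^1\to L^\infty}\le C|t-s|^{-\sigma}$; Riesz--Thorin interpolation then yields
\begin{equation*}
\|U(t)U(s)^*\|_{L^{r'}\to L^r}\lesssim|t-s|^{-\sigma(1-\frac2r)},\qquad t\neq s.
\end{equation*}
For the non-endpoint range $q>2$ I would estimate $|T(F,G)|$ by $\int\!\int|t-s|^{-\sigma(1-2/r)}\|F(s)\|_{L^{r'}}\|G(t)\|_{L^{r'}}\,ds\,dt$ and apply the one-dimensional Hardy--Littlewood--Sobolev inequality in time. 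Its hypotheses force the exponent $\lambda:=\sigma(1-\frac2r)$ to lie strictly in $(0,1)$, and the resulting balance $\frac2q=\sigma(1-\frac2r)$ is exactly the admissibility relation $\frac2q=n(\frac12-\frac1r)$ once one sets $\sigma=\frac n2$. The degenerate corner $\lambda=0$ (that is $r=2$, $q=\infty$) is nothing but the energy estimate, and the borderline $\lambda=1$ is the endpoint.

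The hard part, and the genuine content of the theorem, is the endpoint $q=2$, where $\lambda=1$ and HLS fails by a logarithm. Here I would follow Keel--Tao: decompose dyadically in the time-separation, $T=\sum_{j\in\Z}T_j$ with $T_j$ restricting the $ds\,dt$ integral to $|t-s|\sim 2^j$, and prove the off-diagonal family of bilinear bounds
\begin{equation*}
|T_j(F,G)|\lesssim 2^{-j\beta(a,b)}\,\|F\|_{L^2_tL^{a'}}\,\|G\|_{L^2_tL^{b'}},\qquad \beta(a,b)=\sigma\Big(1-\tfrac1a-\tfrac1b\Big)-1,
\end{equation*}
for $(1/a,1/b)$ ranging over a small neighbourhood of the endpoint exponent $(1/r,1/r)$. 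Each such bound follows by complex bilinear interpolation between the two diagonal estimates ($L^2\to L^2$ and $L^1\to L^\infty$), combined with Young's inequality in time over the window of length $\sim 2^j$, which contributes the factor $2^j$. At the endpoint itself $\beta(r,r)=0$, so naive summation over $j$ diverges; but $\beta(a,b)$ is a nonzero affine function of $(1/a,1/b)$ that changes sign as $(a,b)$ moves off the diagonal, giving exponential gain as $j\to+\infty$ on one side and as $j\to-\infty$ on the other.

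The crux, and the single delicate step, is then the abstract summation: I would invoke Keel--Tao's real-interpolation (atomic) lemma, which sums a sequence of bilinear forms possessing such sign-changing exponential off-diagonal decay and recovers boundedness precisely on the critical exponent $(1/r,1/r)$, that is for $q=2$ (requiring $r<\infty$, which the admissible set enforces). \emph{This endpoint summation is the main obstacle;} everything preceding it is duality, Riesz--Thorin and Hardy--Littlewood--Sobolev, and the restriction $r\ge 2$ together with $q\ge 2$ is exactly what makes the exponents $a',b'$ admissible throughout.
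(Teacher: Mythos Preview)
Your outline is a faithful sketch of the original Keel--Tao argument: the $TT^*$ reduction to a bilinear form, Riesz--Thorin interpolation plus Hardy--Littlewood--Sobolev for the non-endpoint range, and for the endpoint the dyadic decomposition in $|t-s|$ followed by the off-diagonal bilinear estimates and the real-interpolation summation lemma. There is no genuine gap in the strategy as you describe it.

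However, the paper does \emph{not} prove this theorem at all. It is stated there purely as a quotation of the abstract Keel--Tao machinery from \cite{KT}, and is then applied as a black box: one simply sets $U(t)=e^{-itH_{\alpha,B_0}}\chi_{[0,T]}(t)$, verifies the energy estimate \eqref{energy} and the dispersive estimate \eqref{dis-S-0}, and concludes the Strichartz bound \eqref{str-S}. So there is nothing to compare your approach against; the paper's ``proof'' of this theorem is a citation. Your sketch is correct and matches the content of \cite{KT}, but in the context of this paper the theorem is an input, not something the authors argue for.
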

By making use of this theorem to $U(t)=e^{-itH_{\alpha,B_0}}\chi_{[0,T]}(t)$, since \eqref{dis-S-0} and \eqref{energy}, we obtain \eqref{str-S}.
Therefore we finish the proof of Theorem \ref{thm:S}.

\begin{center}

\end{center}


\begin{thebibliography}{99}

\bibitem{AAR01} {\sc G. E. Andrews, R. Askey and R. Roy}, \textit{ Special Functions} (Encyclopedia of Mathematics and its Applications), Cambridge University Press, 2001.

\bibitem{AB59} {\sc Y. Aharonov and D. Bohm}, \textit{ Significance of electromagnetic potentials in the quantum theory}, Phys. Rev. 115 (1959), no. 2, 485-491.

\bibitem{AHS1} {\sc J. E. Avron, I. W. Herbst and B. Simon}, \textit{ Schr\"odinger operators with magnetic fields. I. General interactions}, Duke Math. J. 45 (1978), no. 4, 847-883.

\bibitem{AHS2} {\sc J. E. Avron, I. W. Herbst and B. Simon}, \textit{ Separation of center of mass in homogeneous magnetic fields}, Ann. Phys. 114 (1978), no. 1-2, 431-451.

\bibitem{AHS3} {\sc J. E. Avron, I. W. Herbst and B. Simon}, \textit{ Schr\"odinger operators with magnetic fields. III. Atoms in homogeneous magnetic field}, Comm. Math. Phys. 79 (1981), no. 4, 529-572.

\bibitem{AS65} {\sc M. Abramowitz and I. A. Stegun}, \textit{ Handbook of mathematical functions with formulas, graphs and mathematical tables}, U. S. Government Printing Office, Washington, DC, 1965.

\bibitem{BPSS} {\sc N. Burq, F. Planchon, J. G. Stalker and A. S. Tahvildar-Zadeh}, \textit{ Strichartz estimates for the wave and Schr\"odinger equations with the inverse-square potential}, J. Funct. Anal. 203 (2003), no. 2, 519-549.

\bibitem{BPST} {\sc N. Burq, F. Planchon, J. G. Stalker and A. S. Tahvildar-Zadeh}, \textit{ Strichartz estimates for the wave and Schr\"odinger equations with potentials of critical decay}, Indiana Univ. Math. J.  53 (2004), no. 6, 1665-1680.

\bibitem{CS} {\sc S. Cuccagna and P. P. Schirmer}, \textit{ On the wave equation with a magnetic potential}, Comm. Pure Appl. Math. 54 (2001), no. 2, 135-152.


\bibitem{DF} {\sc P. D'Ancona and L. Fanelli}, \textit{ Decay estimates for the wave and Dirac equations with a magnetic potential}, Comm. Pure Appl. Math. 60 (2007), no. 3, 357-392.

\bibitem{DFVV} {\sc P. D'Ancona, L. Fanelli, L. Vega and N. Visciglia}, \textit{ Endpoint Strichartz estimates for the magnetic Schr\"odinger equation}, J. Funct. Anal.  258 (2010), no. 10, 3227-3240.


\bibitem{EGS1} {\sc M. B. Erdo\u{g}an, M. Goldberg and W. Schlag}, \textit{ Strichartz and smoothing estimates for Schr\"odinger operators with almost critical magnetic potentials in three and higher dimensions}, Forum Math. 21 (2009), no. 4, 687-722.

\bibitem{EGS2} {\sc M. B. Erdo\u{g}an, M. Goldberg and W. Schlag}, \textit{ Strichartz and smoothing estimates for Schr\"odinger operators with large magnetic potentials in $\R^3$}, J. Eur. Math. Soc. 10 (2008), no. 2, 507-531.

\bibitem{ESV} {\sc P. Exner, P. \v{S}t'ov\'{\i}\v{c}ek and P. Vyt\v{r}as}, \textit{ Generalised boundary conditions for the Aharonov-Bohm effect combined with a homogeneous magnetic field}, J. Math. Phys. 43 (2002), no. 5, 2151-2168.

\bibitem{Fanelli} {\sc L. Fanelli}, \textit{ Spherical Schr\"odinger Hamiltonians: spectral analysis and time decay}, A. Michelangeli, G. DellAntonio (eds.), Advances in Quantum Mechanics, Springer INdAM Ser., 18, Springer, Cham, 2017.

\bibitem{FFFP} {\sc L. Fanelli, V. Felli, M. A. Fontelos and A. Primo}, \textit{ Time decay of scaling invariant electromagnetic Schr\"odinger equations on the plane}, Comm. Math. Phys. 337 (2015), no. 3, 1515-1533.

\bibitem{FFFP1} {\sc L. Fanelli, V. Felli, M. A. Fontelos and A. Primo}, \textit{ Time decay of scaling critical electromagnetic Schr\"odinger flows}, Comm. Math. Phys. 324 (2013), no. 3, 1033-1067.

\bibitem{FZZ22} {\sc L. Fanelli, J. Zhang and J. Zheng}, \textit{ Dispersive estimates for 2D-wave equations with critical potentials}, Adv. Math. 400 (2000), Paper No. 108333, 46 pp.

\bibitem{GYZZ22} {\sc X. Gao, Z. Yin, J. Zhang and J. Zheng}, \textit{ Decay and Strichartz estimates in critical electromagnetic fields}, J. Funct. Anal. 282 (2022), no. 5, Paper No. 109350, 51 pp.

\bibitem{KL} {\sc T. F. Kieffer and M. Loss}, \textit{ Non-linear Schr\"odinger equation in a uniform magnetic field}, Partial differential equations, spectral theory, and mathematical physics - the Ari Laptev anniversary volume, 247-265, EMS Ser. Congr. Rep. EMS Press, Berlin, 2021.

\bibitem{KT} {\sc M. Keel and T. Tao}, \textit{ Endpoint Strichartz estimates}, Amer. J. Math. 120 (1998), no. 5, 955-980.

\bibitem{KT05} {\sc H. Koch and D. Tataru}, \textit{ $L^p$ eigenfunction bounds for the Hermite operator}, Duke Math. J. 128 (2005), no. 2, 369-392.

\bibitem{RS} {\sc M. Reed and B. Simon}, \textit{ Methods of modern mathematical physics. II. Fourier analysis, self-adjointness}, Academic Press, New York-London, 1975.

\bibitem{S} {\sc W. Schlag}, \textit{ Dispersive estimates for Schr\"odinger operators: a survey}, Mathematical aspects of nonlinear dispersive equations, 255-285, Ann. of Math. Stud., 163, Princeton University Press, Princeton, NJ, 2007.

\bibitem{sogge} {\sc C. D. Sogge}, \textit{ Hangzhou lectures on eigenfunctions of the Laplacian}, Ann. of Math. Stud., 188, Princeton University Press, Princeton, NJ, 2014.

\bibitem{stov} {\sc P. \v{S}t'ov\'{\i}\v{c}ek}, \textit{ The heat kernel for two Aharonov-Bohm solenoids in a uniform magnetic field}, Ann. Phys. 376 (2017), 254-282.

\bibitem{stov1} {\sc P. \v{S}t'ov\'{\i}\v{c}ek}, \textit{ The green's function for the two-solenoid Aharonov-Bohm effect}, Phys. Lett. A 142 (1989), no. 1, 5-10.

\bibitem{Wat44} {\sc G. N. Watson}, \textit{ A Treatise on the Theory of Bessel Functions}, Cambridge University Press, CambridgeThe Macmillan Company, New York, 1944.



\end{thebibliography}
\end{document}